\theoremstyle{plain}%
\newtheorem{thm}{Theorem} 
\newtheorem{cor}[thm]{Corollary}
\theoremstyle{definition}
\newtheorem{dfn}[thm]{Definition}
\theoremstyle{remark}
\newtheorem{ex}[thm]{Example}
\newtheorem{rem}[thm]{Remark}          
\newtheorem*{ack}{Acknowledgments}       
 \newtheorem*{conv}{Convention}
\author{Frédéric Mangolte}
\address{LUNAM Universit\'e, LAREMA, Universit\'e d'Angers} \email{frederic.mangolte@univ-angers.fr}
\urladdr{http://www.math.univ-angers.fr/~mangolte}
\title[Real rational surfaces]{Real rational surfaces} \alttitle{Real rational surfaces}
\date{\today}
\begin{document}

\frontmatter
%
\subjclass{14P25}
\keywords{rational real algebraic surface, topological surface, rational model, birational diffeomorphism, automorphism group, regulous map, continuous rational map}
\maketitle


\mainmatter
\section{Introduction}

During the last decade\footnote{With the exception of some classical references, only references over the past years from the preceding "RAAG conference in Rennes", which took place in 2001, are included.}, there were many progresses in the understanding of the topology of real algebraic manifolds, above all in dimensions $2$ and~$3$. Results on real algebraic threefolds were addressed in the survey \cite{ma-gaz} with a particular emphasis on Koll\'ar's results and conjectures concerning real uniruled and real rationally connected threefolds, see \cite{Ko01real}, \cite{hm1,hm2}, \cite{cm1,cm2}, \cite{mw1}. In the present paper, we will focus on real rational surfaces and especially on  their birational geometry.
Thus the three next sections are devoted to real rational surfaces; they are presented in a most elementary way. We state Commessatti's and Nash-Tognoli's famous theorems (Theorem~\ref{thm.com} and Theorem~\ref{thm.nash-to}). Among other things, we give a sketch of proof of the following statements:

\begin{itemize}
\item  Up to isomorphism, there is exactly one single real rational model of each nonorientable surface (Theorem~\ref{thm.bh}); 
\item The group of birational diffeomorphisms of a real rational surface is infinitely transitive (Theorem~\ref{thm.hm}); 
\item The group of birational diffeomorphisms of a real rational surface $X$ is dense in the group of $\cC^\infty$-diffeomorphisms $\Diff(\xr)$ (Theorem~\ref{thm.km1}). 
\end{itemize}

We conclude the paper with Section 5 devoted to a new line of research: the theory of \emph{regulous functions} and the geometry we are able to define with them.

Besides the progresses in the theory of real rational surfaces, the classification of other real algebraic surfaces has considerably advanced during the last decade (see \cite{Kh06} for a survey):
topological types and deformation types of real Enriques surfaces \cite{DIK}, deformation types of geometrically\footnote{See p.~\pageref{geomrat} before Theorem~\ref{thm.geomrat}.} rational surfaces \cite{DK02}, deformation types of real ruled surfaces \cite{We03}, topological types and deformation types of real bielliptic surfaces \cite{CF03}, topological types and deformation types of real elliptic surfaces \cite{am,bm1,DIK08}.

The present survey is an expansion of the preprint written by Johannes Huisman \cite{Hu11} from which we have borrowed several parts.

\begin{conv} 
In this paper, a \emph{real algebraic surface} (resp. \emph{real algebraic curve}) is a projective complex algebraic \emph{manifold} of complex dimension $2$ (resp.~$1$) endowed with an anti-holomophic involution whose set of fixed points is called \emph{the real locus} and denoted by $\xr$. A \emph{real map} is a complex map commuting with the involutions.
A \emph{topological surface} is a real $2$-dimensional $\cC^\infty$-manifold. By our convention, a real algebraic surface $X$ is nonsingular; as a consequence, if nonempty, the real locus $\xr$ gets a natural structure of a topological surface when endowed with the euclidean topology. Furthermore $\xr$ is compact since $X$ is projective.
\end{conv}

\begin{ack}
Thanks to Daniel Naie for sharing his picture of the real locus of a blow-up, see Figure~1, to Jérémy Blanc for old  references and the referee for useful remarks.
\end{ack}

\section{Real rational surfaces}
\subsection{Examples of rational surfaces}
\label{sec.real.rat}%
A real algebraic surface $X$ is \emph{rational} if it contains a Zariski-dense subset real isomorphic to the affine plane $\aA^2$. This is equivalent, as we shall see below,  to the fact that the function field of $X$ is isomorphic to the field of rational functions $\RR(x,y)$.  
In the sequel, a rational real algebraic surface will be called a \emph{real rational surface} for short and by our general convention, always assumed to be projective and nonsingular.

\begin{ex}
\label{ex.first}%
\begin{enumerate}
\item The real projective plane $\PP^2_{x:y:z}$ is rational. Indeed, each of the coordinate charts $U_0=\{x\ne 0\}$, $U_1=\{y\ne 0\}$, $U_2=\{z\ne 0\}$ is isomorphic to $\aA^2$. The real locus $\PP^2(\RR)$ endowed with the euclidean topology is the topological real projective plane.
\item The product surface $\PP^1_{x:y}\times\PP^1_{u:v}$ is rational. Indeed, the product open subset $\{x\ne 0\}\times\{u\ne 0\}$ is isomorphic to $\aA^2$. 
The set of real points $(\PP^1\times\PP^1)(\RR)=\PP^1(\RR)\times\PP^1(\RR)$ is diffeomorphic to the $2$-dimensional torus $\sS^1\times \sS^1$ where $\sS^1$ denotes the unit circle in $\RR^2$.
\item The quadric $Q_{3,1}$ in the projective space $\PP^3_{w:x:y:z}$ given by the affine equation $x^2+y^2+z^2=1$ is rational. 
Indeed, for a real point $P$ of $Q_{3,1}$, denote by $T_PQ_{3,1}$ the real projective plane in $\PP^3$ tangent to $Q_{3,1}$ at $P$. 
Then the stereographic projection $Q_{3,1}\setminus T_{P}Q_{3,1}\to \aA^2$ is an isomorphism of real algebraic surfaces. 
For example in the case $P$ is the North pole $N=[1:0:0:1]$, let 
$\pi_N\colon Q_{3,1}\to \PP^2_{U:V:W}$ be the rational map given by 
$$
\pi_N\colon [w:x:y:z]\dasharrow [x:y:w-z]\;.
$$ 
Then $\pi_N$ restricts to the stereographic projection from $Q_{3,1}\setminus T_{N}Q_{3,1}$ onto its image $\pi_N(Q_{3,1}\setminus T_{N}Q_{3,1})=\{w\ne 0\}\simeq \aA^2$.

(The inverse rational map $\pi_N^{-1}\colon \PP^2\dasharrow Q_{3,1}$ is given by 
$$
\pi_N^{-1}\colon [x:y:z]\dasharrow [x^2+y^2+z^2:2xz:2yz:x^2+y^2-z^2]) \;.
$$

The real locus $Q_{3,1}(\RR)$ is the unit sphere $\sS^2$ in $\RR^3$.
\end{enumerate}
\end{ex}

To produce more examples, we recall the construction of the blow-up which is especially simple in the context of rational surfaces.

The blow-up $B_{(0,0)}\aA^2$ of $\aA^2$ at $(0,0)$ is the quadric hypersurface defined in $\aA^2\times \PP^1$ by
$$
B_{(0,0)}\aA^2=
\{((x,y),[u:v])\in \aA^2_{x,y}\times \PP^1_{u:v} \st uy=vx\}.
$$

The blow-up $B_{[0:0:1]}\PP^2$ 
of $\PP^2$ at $P=[0:0:1]$ is the algebraic surface
\begin{multline*}
B_{[0:0:1]}\PP^2=
\{([x:y:z],[u:v])\in \PP^2_{x:y:z}\times \PP^1_{u:v} \st  uy-vx=0\}.
\end{multline*}

The open subset $V_0=\{((x,y),[u:v])\in B_{(0,0)}\aA^2\st u\ne 0\}$ is Zariski-dense in $B_{(0,0)}\aA^2$ and the map $\varphi \colon V_0 \to \aA^2$, $((x,y),[u:v])\mapsto (x,\frac vu)$ is an isomorphism. Similarly, the open subset 
$$
\widetilde{U_2}=\{([x:y:z],[u:v])\in B_{[0:0:1]}\PP^2\st z\ne 0, u\ne 0\}
$$ 
is Zariski-dense in $B_{[0:0:1]}\PP^2$ and the map $\widetilde{U_2} \to U_2 \simeq \aA^2$, 
$$
([x:y:z],[u:v])\mapsto [ux:v:uz]
$$ 
is an isomorphism.
Thus $B_{[0:0:1]}\PP^2$ is rational. Now remark that the map $\varphi \colon V_1=\{v\ne 0\} \to \aA^2$, $((x,y),[u:v])\mapsto (x,\frac uv)$ is also an isomorphism and the surface $B_{(0,0)}\aA^2$ is thus covered by two open subsets, both isomorphic to $\aA^2$.
We deduce that the surface $B_{[0:0:1]}\PP^2$ is covered by the three open subsets $U_0,U_1,\widetilde{U_2}=B_{[0:0:1]}U_2\simeq B_{(0,0)}\aA^2$ hence covered by four open subsets, both isomorphic to $\aA^2$. 
Up to affine transformation, we can define $B_P\PP^2$  for any $P\in\PP^2$ and it is now clear that the surface $B_P\PP^2$ is covered by a finite number of open subsets, each isomorphic to $\aA^2$. The same is clearly true for $\PP^1\times \PP^1$. It is also true for $Q_{3,1}$. Indeed, choose $3$ distinct real points $P_1,P_2,P_3$ of $Q_{3,1}$, and denote the open set $Q_{3,1}\setminus T_{P_i}Q_{3,1}$ by $U_i$, for $i=1,2,3$. Since the common intersection of the three projective tangent planes is a single point, that, moreover does not belong to $Q_{3,1}$, the subsets $U_1,U_2,U_3$ constitute an open affine covering of $Q_{3,1}$.

Let $X$ be an algebraic surface and $P$ be a real point of $X$. Assume that $P$ admits a neighborhood $U$ isomorphic to $\aA^2$ which is dense in $X$ (by Corollary~\ref{cor.rat} below we have in fact that if $X$ is rational, any real point of $X$ has this property), and define  
the blow-up of $X$ at $P$ to be the real algebraic surface obtained from $X\setminus \{P\}$ and $B_PU$ by gluing them along their common open subset $U\setminus \{P\}$. Then $B_PU\simeq B_PU_0$ is dense in $B_PX$ and contains a dense open subset isomorphic to $U_0\simeq \aA^2$. At this point, we admit  that this construction does neither depends on the choice of $U$, nor on the choice of the isomorphism between $U$ and $\aA^2$. See e.g. \cite[\S II.4.1]{Sc94} or \cite[Appendice A]{ma-gaz} for a detailed exposition. 

We get:
\begin{prop}
\label{prop.blowseq}%
Let $X_0$ be one of the surfaces $\PP^2$, $\PP^1\times \PP^1$ or $Q_{3,1}$. If
$$
X_n\stackrel{\pi_n}{\longrightarrow} X_{n-1} \stackrel{\pi_{n-1}}{\longrightarrow} \cdots \stackrel{\pi_{1}}{\longrightarrow} X_0
$$
is a sequence of blow-ups at real points, then $X_n$ is a real rational surface.
\end{prop}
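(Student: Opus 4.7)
The plan is to prove by induction on $n$ the stronger statement $(S_n)$ that $X_n$ can be covered by finitely many Zariski open subsets, each isomorphic to $\aA^2$. Since every successive blow-up of an irreducible variety at a smooth real point is irreducible, any nonempty open subset of $X_n$ is Zariski-dense, so $(S_n)$ immediately implies that $X_n$ is rational in the sense of the paper's definition.

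The base case $n=0$ is exactly what the discussion preceding the proposition establishes: the three affine charts of $\PP^2$, the four products of affine charts of $\PP^1\times\PP^1$, and the three opens $Q_{3,1}\setminus T_{P_i}Q_{3,1}$ for three suitable real points $P_1,P_2,P_3$ provide the required covers, each chart being isomorphic to $\aA^2$.

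For the inductive step, assume $(S_{n-1})$ with cover $U_1,\dots,U_k$, and let $P$ be the real point of $X_{n-1}$ blown up by $\pi_n$. Choosing an index $i$ with $P\in U_i$ provides a Zariski-dense neighborhood of $P$ isomorphic to $\aA^2$, which is precisely the data required by the excerpt's blow-up construction, so $X_n=B_PX_{n-1}$ is well-defined. I would then cover $X_n$ by the open subsets $\pi_n^{-1}(U_j)$: when $P\notin U_j$ the morphism $\pi_n$ restricts to an isomorphism $\pi_n^{-1}(U_j)\simeq U_j\simeq \aA^2$; when $P\in U_j$ the preimage identifies with $B_{(0,0)}\aA^2$, which the excerpt shows is itself covered by the two open subsets $V_0$ and $V_1$, each isomorphic to $\aA^2$. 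Collecting these pieces over all $j$ yields the desired finite cover of $X_n$ by charts isomorphic to $\aA^2$, completing the induction.

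The only non-routine point is the identification $\pi_n^{-1}(U_j)\simeq B_{(0,0)}\aA^2$ for the charts $U_j$ containing $P$, which is exactly the chart-independence of the blow-up admitted in the excerpt just before the proposition; granting this, the rest of the argument is bookkeeping. Notice that the hypothesis that blow-ups be performed at \emph{real} points is used to ensure that $P$ lies in some $U_j$ from the real cover and that the blown-up surface is again a real algebraic surface in the paper's sense.
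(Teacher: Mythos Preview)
Your proof is correct and follows essentially the same approach as the paper. The paper's one-sentence proof appeals to the preceding discussion to conclude that every real point of each $X_i$ has a Zariski-dense neighborhood isomorphic to $\aA^2$; you make the underlying induction explicit by proving the stronger covering statement $(S_n)$, which is exactly what the preceding discussion sets up for $X_0$ and for a single blow-up.
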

\begin{proof}
Indeed, from Example~\ref{ex.first} and the comments above, any point $P\in X_i$ admits a neighborhood $U$ isomorphic to $\aA^2$ which is dense in $X_i$.
\end{proof}
Let $\pi \colon B_PX \to X$ be the blow-up of $X$ at $P$.
The curve $E_P= \pi^{-1}\{P\}$ is the \emph{exceptional curve} of the blow-up. 
We say that $B_PX$ is the blow-up of $X$ at $P$ and that $X$ is obtained from $B_PX$ by the \emph{contraction} of the curve $E_P$.

\begin{ex}
\label{ex.nonrealblow}%
Notice that if $P$ is a real point of $X$, the resulting blown-up surface gets an anti-holomorphic involution lifting the one of~$X$. If $P$ is not real, we can obtain a real surface anyway by blowing up both $P$ and $\overline{P}$:
let $U$ be an open neighborhood of $P$ which is complex isomorphic to $\aA^2(\CC)$ and define $B_{P,\overline{P}}X$ to be the result of the gluing of $X\setminus \{P,\overline{P}\}$ with both $B_PU$ and $B_{\overline{P}}\overline{U}$.
\end{ex}

\begin{rem}
In Example~\ref{ex.first}.3, the rational map $\pi_N$ decomposes into the blow-up of $Q_{3,1}$ at $N$, followed by the contraction of the strict transform of the curve $z=w$ (intersection of $Q_{3,1}$ with the tangent plane $T_NQ_{3,1}$), which is the union of two non-real conjugate lines. The rational map $\pi_N^{-1}$ decomposes into the blow-up of the two non-real points $[1:\pm i :0]$, followed by the contraction of the strict transform of the line $z=0$.
\end{rem}

The exceptional curve is a real rational curve isomorphic to $\PP^1$ whose real locus $E_P(\RR)$ is diffeomorphic to the circle $\sS^1$. Furthermore, the normal bundle of the smooth curve $E_P(\RR)$ in the smooth surface $B_PX(\RR)$ is nonorientable, thus $E_P(\RR)$ possesses a neighborhood diffeomorphic to the M\"obius band in $B_PX(\RR)$. Hence, topologically speaking, $B_PX(\RR)$ is obtained from $\xr$ through the following surgery (see Figure~\ref{fig.bup}): from $\xr$, remove a disk $D$ centered at $P$ and note that the boundary $\partial D$ is diffeomorphic to the circle~$\sS^1$, then paste a M\"obius band $M$, whose boundary $\partial M$ is also diffeomorphic to the circle $\sS^1$, to get $B_PX(\RR)$ which is then diffeomorphic to the connected sum (see e.g. \cite[Section~9.1]{Hi76}) 

$B_PX(\RR)\approx X(\RR)\#\PP^2(\RR)$.

\begin{figure}[ht] 
\centering
\includegraphics[scale=.7]{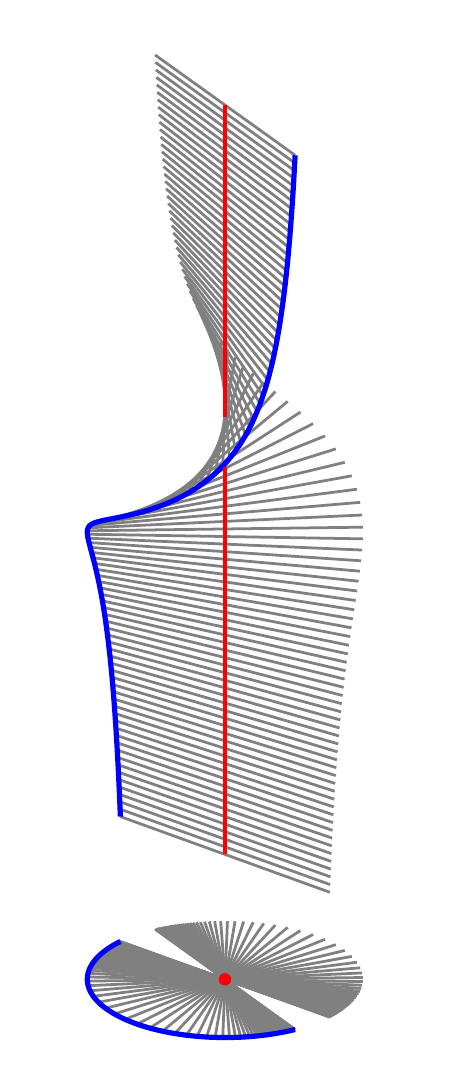}
\caption{The real locus of the exceptional curve is depicted by the vertical line.}
        \label{fig.bup}
\end{figure}
In particular 
\begin{equation}
\label{eq.klein}%
(B_P\PP^2)(\RR)\approx \PP^2(\RR)\#\PP^2(\RR)
\end{equation}
is the \emph{Klein bottle}.
From the classification of compact connected topological surfaces (see e.g. \cite[Theorem~9.3.10]{Hi76}), we know that any nonorientable compact connected topological surface $S$ is diffeomorphic to the connected sum of $g$ copies of the real projective plane $\PP^2(\RR)$:
$$
S\approx \PP^2(\RR)\#\PP^2(\RR)\#\dots \#\PP^2(\RR)\;.
$$

The positive integer $g$ is uniquely determined by $S$ and is called the \emph{genus} of $S$; for example the genus of $\PP^2(\RR)$ is $1$ and the genus of the Klein bottle is~$2$.

\begin{rem}
\begin{enumerate}

\item The uniqueness of $g$ follows from the fact that the abelianization of the fundamental group $\pi_1(S)$ is isomorphic to $\ZZ_2\oplus \ZZ^{g-1}$ if $S$ is a nonorientable surface of genus $g$ (the genus of a nonorientable surface is always positive). 
\item To be complete, recall that the genus of $\sS^2$ is $0$ and that 
an orientable surface $S$ of genus $g\geq 1$ is diffeomorphic to the connected sum of $g$ copies of the torus $\sS^1\times \sS^1$; the abelianization of $\pi_1(S)$ is isomorphic to $\ZZ^{2g}$. 
\item Let $S$ be an orientable surface of genus $g$, the abelianization of $\pi_1(\PP^2(\RR)\#S)$ is isomorphic to $\ZZ_2\oplus \ZZ^{2g}$. Hence the connected sum $\PP^2(\RR)\#S$ is a nonorientable surface of genus $2g+1$.
\end{enumerate}
\end{rem}

\subsection{Rational models}
Up to this point, from a given real rational surface, we worked out the topology of its real locus. We reverse now the point of view.

\begin{dfn}
Let $S$ be a compact connected topological surface. A real rational surface $X$ is a \emph{real rational model} of $S$ if the real locus is diffeomorphic to $S$: 
$$
\xr\approx S \;.
$$
\end{dfn}

The preceding observations and Examples~\ref{ex.first}.2 and \ref{ex.first}.3 above lead to the following consequence:
\begin{cor}
\label{cor.nor}%
Let $S$ be a compact connected topological surface. If $S$ is nonorientable, or orientable of genus $0$ or $1$, then $S$ admits a real rational model.
\end{cor}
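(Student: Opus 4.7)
The plan is to handle the three cases (orientable genus $0$, orientable genus $1$, nonorientable of arbitrary genus $g\geq 1$) separately, using the examples and the blow-up construction already established.

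First I would dispose of the two orientable cases by exhibiting explicit models from Example~\ref{ex.first}. For genus $0$, the quadric $Q_{3,1}$ satisfies $Q_{3,1}(\RR)\approx\sS^2$, and $Q_{3,1}$ is rational by Example~\ref{ex.first}.3. For genus $1$, the surface $\PP^1\times\PP^1$ is rational by Example~\ref{ex.first}.2 and its real locus is diffeomorphic to $\sS^1\times\sS^1$, the orientable surface of genus $1$. Thus both cases admit a real rational model immediately.

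The nonorientable case is handled by induction on $g$. The base case $g=1$ is given by $\PP^2$ itself, whose real locus is $\PP^2(\RR)$, the nonorientable surface of genus $1$. For the inductive step, assume we have a real rational surface $X_{g-1}$ with $X_{g-1}(\RR)$ nonorientable of genus $g-1$. Pick any real point $P\in X_{g-1}(\RR)$ and form the blow-up $X_g=B_PX_{g-1}$. By Proposition~\ref{prop.blowseq}, iterated blow-ups at real points of $\PP^2$ produce real rational surfaces, so $X_g$ is rational. By the topological description of the real blow-up recalled above (replace a disk by a Möbius band), we have
$$
X_g(\RR)\approx X_{g-1}(\RR)\#\PP^2(\RR),
$$
which is a nonorientable surface of genus $g$ by the classification of compact surfaces. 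This completes the induction and yields a real rational model for every nonorientable compact connected topological surface.

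There is essentially no obstacle here: the statement is an immediate consequence of the three worked examples in Example~\ref{ex.first} combined with Proposition~\ref{prop.blowseq} and the topological surgery description of the blow-up at a real point given in equation~\eqref{eq.klein} and the surrounding discussion. The only point requiring a word of care is to check that one can indeed iterate the blow-ups freely: each $X_g$ produced is a real rational surface obtained from $\PP^2$ by a sequence of blow-ups at real points, so Proposition~\ref{prop.blowseq} applies at every stage, and the existence of a real point to blow up is ensured by the fact that $X_{g-1}(\RR)$ is nonempty (being a nonorientable surface).
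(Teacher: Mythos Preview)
Your proof is correct and is essentially the argument the paper has in mind: the corollary is stated immediately after the blow-up discussion and the connected sum formula $B_PX(\RR)\approx X(\RR)\#\PP^2(\RR)$, and the paper simply says it follows from ``the preceding observations and Examples~\ref{ex.first}.2 and \ref{ex.first}.3''. The only cosmetic difference is that the paper later phrases the nonorientable model as $B_{P_1,\dots,P_g}Q_{3,1}$ (blowing up $g$ points on the sphere) rather than your $B_{P_1,\dots,P_{g-1}}\PP^2$, but these yield the same topological surface and both are covered by Proposition~\ref{prop.blowseq}.
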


A deep result of Comessatti \cite[p. 257]{Co14} states that the other topological surfaces do not have any real rational model:

\begin{thm}[Comessatti]
\label{thm.com}%
Let $X$ be a nonsingular 
real rational surface.
Then, if orientable, the real locus $\xr$ is diffeomorphic to the sphere $\sS^2$ or to the torus $\sS^1\times \sS^1$.
\end{thm}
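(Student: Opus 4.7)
The plan is to reduce $X$ to a minimal real rational model via the real minimal model program (MMP), track the topological effect of the contractions through the blow-up dictionary developed above, and then invoke the classification of minimal real rational surfaces.

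First, I would run the real MMP on $X$, successively contracting either a real $(-1)$-curve or a pair of disjoint complex conjugate $(-1)$-curves, until reaching a minimal real rational surface $X_{\min}$. Equivalently, $X$ is obtained from $X_{\min}$ by a finite sequence of blow-ups at real points and at pairs of complex conjugate non-real points. By the topological description of blow-ups preceding~(\ref{eq.klein}), blowing up a real point replaces the real locus by its connected sum with $\PP^2(\RR)$, whereas blowing up a conjugate pair of non-real points (Example~\ref{ex.nonrealblow}) leaves the real locus unchanged, the two exceptional curves being non-real. Since iterated connected sum with $\PP^2(\RR)$ cannot yield an orientable surface, the orientability hypothesis on $X(\RR)$ forbids any blow-up at a real point in this sequence; consequently $X(\RR) \approx X_{\min}(\RR)$, and $X_{\min}(\RR)$ is itself orientable.

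It then remains to invoke the classification of minimal real rational surfaces due to Comessatti (revisited by Iskovskikh). Up to isomorphism these are $\PP^2$, the quadric $Q_{3,1}$, the Hirzebruch surfaces $\FF_n$ with $n\neq 1$, and certain del Pezzo surfaces and conic bundles of real Picard rank one. A direct inspection of the real loci singles out $Q_{3,1}(\RR)\approx \sS^2$ and $\FF_{2k}(\RR)\approx \sS^1\times\sS^1$ as the only possibilities with connected orientable real locus; every other minimal model either has nonorientable real locus (as for $\PP^2$ or the odd-index Hirzebruch surfaces) or carries a disconnected real locus, which is ruled out for $\RR$-rational surfaces by Comessatti's classical connectedness theorem.

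The main obstacle is the classification of minimal real rational surfaces itself, a substantial result resting on a Galois-equivariant analysis of $\Pic(X_{\min})$ together with the real Castelnuovo contraction theorem, combined with a case split according to the structure of a Mori fiber space over $\RR$. Granting this classification, the remainder of the argument reduces to the transparent topological bookkeeping described above.
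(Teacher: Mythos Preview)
Your approach is essentially the same as the paper's: both run the real MMP to reach a minimal (Mori fiber space) output, note that real blow-ups destroy orientability so that $X(\RR)\approx X_{\min}(\RR)$, and then appeal to the classification of minimal real rational surfaces. The paper phrases the endpoint via Koll\'ar's trichotomy (nef canonical, conic bundle, del Pezzo) rather than the Comessatti--Iskovskikh list, but the content is the same and both treatments grant the classification as a black box.

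One small correction to your inspection step: your claim that ``every other minimal model either has nonorientable real locus \dots\ or carries a disconnected real locus'' is not literally true. A minimal real conic bundle over $\PP^1$ with exactly two real singular fibers (each of type $x^2+y^2=0$) has connected orientable real locus $\approx\sS^2$; it is not $Q_{3,1}$ as a complex surface but rather $Q_{3,1}$ blown up at a conjugate pair. The paper handles this case explicitly. Since the resulting topology is still $\sS^2$, the conclusion of the theorem is unaffected, but your dichotomy should be refined to ``either has nonorientable real locus, disconnected real locus, or real locus $\sS^2$''.
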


Otherwise said: the real locus of a real rational surface is diffeomorphic to a sphere, a torus, or a nonorientable compact connected topological surface, and all these surfaces have real rational models.

A modern proof uses the Minimal Model Program for real algebraic surfaces as developed by Koll\'ar \cite[p. 206, Theorem.~30]{Ko01real} (see also \cite[Prop.~4.3]{Si89} 
for an alternative proof). In fact that approach gives us an even more precise statement.

Let $X$ and $Y$ be two real rational models of a given topological surface $S$. We will say that $X$ and $Y$ are \emph{isomorphic as real rational models} if their real loci $\xr$ and $\yr$ have isomorphic Zariski open neighborhoods in $X$ and $Y$, respectively. Equivalently, the surfaces $\xr$ and $\yr$ are \emph{birationally diffeomorphic}, that is: there is a diffeomorphism $f \colon \xr \to \yr$ which extends as a real birational map $\psi \colon X \to Y$ whose indeterminacy locus does not intersect $\xr$, and such that the indeterminacy locus of $\psi^{-1}$ does not intersect $\yr$. 

\begin{ex}
\label{ex.p2s2}%
Let $P$ be a real point of the sphere $\sS^2=Q_{3,1}(\RR)$. Then the blow-up $B_PQ_{3,1}$ at $P$ is a real rational model of the topological real projective plane $\PP^2(\RR)$. The projective plane $\PP^2$ is also a real rational model of $\PP^2(\RR)$ as well. Although the real algebraic surfaces $B_PQ_{3,1}$ and $\PP^2$ are not isomorphic, the stereographic projection induces a birational diffeomorphism from $B_PQ_{3,1}(\RR)$ onto $\PP^2(\RR)$ sending the exceptional curve to the line at infinity. The real rational surfaces $B_PQ_{3,1}$ and $\PP^2$ are therefore isomorphic real rational models of the topological surface $\PP^2(\RR)$.
\end{ex}

Collecting preceding observations: $\PP^1\times\PP^1$ is a real rational model of the torus $\sS^1\times \sS^1$, $Q_{3,1}$ is a real rational model of the sphere $\sS^2$ and if $S$ is a nonorientable topological surface of genus $g$, the blow-up $B_{P_1,\dots,P_g}Q_{3,1}$, where $P_1,\dots,P_g$ are $g$ distinct real points, is a real rational model of $S$:  
$$
B_{P_1,\dots,P_g}Q_{3,1}(\RR)\approx B_{P_1,\dots,P_g}\sS^2 \approx \PP^2(\RR)\#\dots \#\PP^2(\RR) \quad \text{($g$ terms)}.
$$

Using Koll\'ar's Minimal Model Program \cite[\textit{loc. cit.}]{Ko01real},
one can prove the following statement (compare \cite[Thm.~3.1]{BH07}):
\begin{thm}
\label{thm.classif}%
Let $S$ be a compact connected topological surface and $X$ be a real rational model of $S$.
\begin{enumerate}
\item If $S$ is nonorientable then $X$ is isomorphic as a real rational model to a real rational model of $S$ obtained from $Q_{3,1}$ by successively blowing up real points only.
\item  If $S$ is orientable then $X$ is isomorphic to $Q_{3,1}$ or $\PP^1\times \PP^1$ as a real rational model.
\end{enumerate}
\end{thm}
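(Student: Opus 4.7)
The plan is to run Kollár's real minimal model program \cite{Ko01real} on $X$, carefully sorting each contraction into those that preserve the diffeomorphism class of $\xr$ (and hence yield an isomorphism of real rational models) and those that genuinely modify the topology of the real locus. The three elementary operations to control are:
(i) contraction of a real $(-1)$-curve $E$ with $E(\RR) = \emptyset$;
(ii) contraction of a disjoint pair $E \cup \overline E$ of complex conjugate $(-1)$-curves with no real points;
(iii) contraction of a real $(-1)$-curve $E$ with $E(\RR) \approx \sS^1$.
Operations (i) and (ii) are isomorphisms over a Zariski neighborhood of $\xr$, hence are birational diffeomorphisms; operation (iii) is exactly the inverse of the Möbius-band surgery pictured in Figure~\ref{fig.bup} and decreases the nonorientable genus of $\xr$ by one.

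First I would exhaust all contractions of types (i) and (ii), reaching a real rational surface $X'$ birationally diffeomorphic to $X$. By Kollár's classification of the output of the real MMP on a rational surface with nonempty real locus \cite[Theorem~30]{Ko01real}, $X'$ is then either a real del Pezzo surface of small Picard rank or admits a real conic bundle structure over $\PP^1$ (real forms of $\PP^1$ without real points being ruled out by $\xr \neq \emptyset$).

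For the orientable case~(2), the existence of a real $(-1)$-curve $E \subset X'$ with $E(\RR) \approx \sS^1$ would exhibit $X'$ as the blow-up of some $Y$ at a real point, and hence force the nonorientable decomposition $\xr \approx \yr \# \PP^2(\RR)$; this contradicts orientability, so no such $E$ exists and $X'$ is genuinely minimal. Inspection of Kollár's list then leaves only $Q_{3,1}$ (when $\xr \approx \sS^2$) and $\PP^1 \times \PP^1$ (when $\xr \approx \sS^1 \times \sS^1$), since $\PP^2(\RR)$ is nonorientable and any remaining conic bundle over $\PP^1$ with orientable real locus can be brought to $\PP^1 \times \PP^1$ by real elementary transformations performed away from $\xr$.

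For the nonorientable case~(1), I would then alternate operations of type~(iii) with (i) and (ii), producing a sequence
$$
X = X_n \dashrightarrow X_{n-1} \dashrightarrow \cdots \dashrightarrow X_0
$$
in which each arrow is either a birational diffeomorphism or a blow-down of a real $(-1)$-curve with $\sS^1$-real locus, and in which $X_0(\RR)$ is orientable. By~(2), $X_0$ is birationally diffeomorphic to $Q_{3,1}$ or $\PP^1 \times \PP^1$; a further chain of blow-ups and blow-downs, relying on the identification of $\PP^2$ with $B_PQ_{3,1}$ as real rational models (Example~\ref{ex.p2s2}), allows one to always arrange $X_0 = Q_{3,1}$. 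Reversing the sequence exhibits $X$ as a real rational model isomorphic to an iterated blow-up of $Q_{3,1}$ at real points. The main obstacle is the detailed invocation of Kollár's real MMP: one must verify that every extremal contraction on a real rational surface can be realized via one of the three elementary types above, and that the concluding exchange argument always allows reducing to $Q_{3,1}$ rather than $\PP^1 \times \PP^1$.
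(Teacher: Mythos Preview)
Your strategy is essentially the paper's: run Koll\'ar's real MMP \cite[Theorem~30]{Ko01real} and analyse the minimal output. Your organisation by contraction types (i), (ii), (iii) is a mild repackaging of the paper's single MMP run followed by a case analysis of $X_0$, but the underlying engine is identical.

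There is, however, a gap in your handling of the MMP output list. After exhausting types (i) and (ii) you immediately invoke Koll\'ar's classification for $X'$, but $X'$ is only an MMP output once you have also argued (as you do a paragraph later, for the orientable case) that no type (iii) curve survives; the logic should be reordered. More substantively, your treatment of the minimal list is incomplete in two places. First, Koll\'ar's list contains del Pezzo surfaces with \emph{disconnected} real locus (e.g.\ the degree~2 surface of Example~\ref{ex.dp2}); the paper excludes these via the connectedness of $\xr$ for a real rational surface, and you must do the same. Second, your claim that ``any remaining conic bundle over $\PP^1$ with orientable real locus can be brought to $\PP^1\times\PP^1$ by real elementary transformations performed away from $\xr$'' is false as stated: a minimal conic bundle with exactly two real singular fibres has $X_0(\RR)\approx\sS^2$, not a torus, and no sequence of birational diffeomorphisms can turn a sphere into a torus. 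The paper handles this case separately, identifying such an $X_0$ with $Q_{3,1}$ blown up at a nonreal conjugate pair (hence birationally diffeomorphic to $Q_{3,1}$), and for the genuine $\PP^1$-bundle case appeals to \cite[Theorem~1.3]{Ma06} rather than a bare assertion about elementary transformations. Finally, in the nonorientable case you should make explicit (as the paper does) the passage through $B_P(\PP^1\times\PP^1)\cong B_{P_1,P_2}\PP^2$ when $X_0=\PP^1\times\PP^1$, since your ``further chain of blow-ups and blow-downs'' only mentions the $\PP^2 \leftrightarrow Q_{3,1}$ identification.
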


\begin{rem}
In statement 1. above, the real rational model obtained from $Q_{3,1}$ may \textit{a priori} include infinitely near points
\end{rem}

\begin{cor}
\label{cor.rat}%
Any (nonsingular) real rational surface is covered by a finite number of open subsets, each isomorphic to $\aA^2$.
\end{cor}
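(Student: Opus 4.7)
The plan is to induct on the number of blow-ups, using Theorem~\ref{thm.classif} as the base of the induction. By that theorem --- or more precisely by the real Minimal Model Program underlying it --- any real rational surface $X$ is obtained from one of the three base surfaces $\PP^2$, $\PP^1\times\PP^1$, or $Q_{3,1}$ by a finite sequence of real blow-ups: at real points in the nonorientable case (Theorem~\ref{thm.classif}.1), and at real points or pairs of conjugate non-real points in the orientable case, since Theorem~\ref{thm.classif}.2 asserts only an isomorphism of real rational models. Each of the three base surfaces has already been exhibited in the preceding discussion as a union of finitely many opens $\RR$-isomorphic to $\aA^2$: the three standard charts of $\PP^2$, the four product charts of $\PP^1\times\PP^1$, and the three tangent-plane complements $Q_{3,1}\setminus T_{P_i}Q_{3,1}$ of $Q_{3,1}$.

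For the inductive step at a real point $P\in X$, the argument is the one spelled out for $B_{[0:0:1]}\PP^2$ in the preceding section. Write $X=\bigcup_{j=1}^N U_j$ with each $U_j\simeq\aA^2$ over $\RR$. Then $B_PX$ is the union of the opens $B_PU_j$: when $P\notin U_j$ one has $B_PU_j=U_j\simeq\aA^2$, and when $P\in U_j$ the surface $B_PU_j$ is itself covered by the two $\aA^2$-charts $V_0=\{u\neq 0\}$ and $V_1=\{v\neq 0\}$ coming from the defining equation $uy=vx$ (after an affine translation placing $P$ at the origin). This yields a finite cover of $B_PX$ by opens isomorphic to $\aA^2$, of cardinality at most $2N$.

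The main obstacle is the case of blow-ups at a conjugate pair $\{P,\bar P\}$ of non-real points. The natural $\CC$-affine charts on $B_{P,\bar P}U_\CC$ come in $\Gal(\CC/\RR)$-stable pairs $\{V_i,\bar V_i\}$ whose Galois descent to $\RR$ yields a variety whose complexification is $\aA^2_\CC\sqcup\aA^2_\CC$ rather than $\aA^2_\RR$. One must instead construct $\aA^2_\RR$-charts directly from the real equations of the double blow-up, exploiting that any real $\aA^2_\RR$-chart of $X$ meeting $\{P,\bar P\}$ automatically contains both points by Galois-stability. This step is somewhat delicate; for the nonorientable setting of Theorem~\ref{thm.classif}.1 --- which is the paper's main focus --- it is bypassed altogether, and the corollary follows without it.
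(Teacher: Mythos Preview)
The paper offers no explicit proof; the corollary is placed immediately after Theorem~\ref{thm.classif} and is meant to follow from it together with the discussion preceding Proposition~\ref{prop.blowseq}. Your approach via the underlying MMP is the right idea, but there is a genuine gap. You claim that in the nonorientable case $X$ is obtained from $Q_{3,1}$ by blowing up real points only, citing Theorem~\ref{thm.classif}.1; but that theorem asserts only that $X$ is \emph{isomorphic as a real rational model} to such a blow-up --- meaning $\xr$ and the real locus of the blow-up have isomorphic Zariski open neighbourhoods --- not that the surfaces themselves are isomorphic. The property of being covered by Zariski opens isomorphic to $\aA^2$ concerns the whole surface, complex points included, and is not invariant under isomorphism of real rational models. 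Running the MMP on $X$ itself yields a chain of blow-ups down to a minimal $X_0$ which may involve non-real centres regardless of the orientability of $\xr$; moreover $X_0$ may be a Hirzebruch surface or the conic bundle $B_{P,\bar P}Q_{3,1}$ with two real singular fibres (see the proof of Theorem~\ref{thm.classif}), not only one of the three surfaces you list.

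Consequently the non-real blow-up case cannot be bypassed, and it is not a formality. In the model computation $B_{P,\bar P}\aA^2=\{u(y^2+1)=vx\}\subset\aA^2\times\PP^1$, the chart $\{v\neq0\}\simeq\aA^2_{u,y}$ misses precisely the two conjugate lines $\{y=\pm i,\ [u:v]=[1:0]\}$, while the complementary chart $\{u\neq0\}$ is the affine quadric $y^2+1=vx$, which is not isomorphic to $\aA^2$ even over~$\CC$ (its topological Euler characteristic is~$2$, not~$1$). Covering those two lines by a real $\aA^2$-chart requires a genuine geometric idea --- for instance, passing to the compactification and exploiting the real isomorphism $B_{P,\bar P}\PP^2\cong B_NQ_{3,1}$ coming from stereographic projection (see the Remark after Example~\ref{ex.nonrealblow}), where the right-hand side is a blow-up of $Q_{3,1}$ at a real point and hence already known to be covered by real copies of $\aA^2$.
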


Therorem~\ref{thm.classif} clearly implies Comessatti's Theorem above, but it also highlights the importance of classifying real rational models of a given topological surface (compare \cite[Theorem~1.3 and comments following it]{Ma06}).
Surprisingly enough, all real rational models of a given topological surface turn out to be isomorphic as real rational models. This has been proved by Biswas and Huisman \cite[Thm.~1.2]{BH07}:
\begin{thm}
\label{thm.bh}%
Let $S$ be a compact connected topological surface. Then any two real rational models of $S$ are isomorphic.
\end{thm}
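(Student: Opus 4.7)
The plan is to deduce the theorem from Theorem~\ref{thm.classif} by reducing it to a transitivity statement about the birational diffeomorphisms of $Q_{3,1}$. The orientable case is essentially immediate: by Theorem~\ref{thm.classif}(2), every model of $\sS^2$ is (as a real rational model) isomorphic to $Q_{3,1}$ and every model of $\sS^1\times\sS^1$ is isomorphic to $\PP^1\times\PP^1$, so two models of the same orientable $S$ are isomorphic as real algebraic surfaces, and a fortiori as real rational models.

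The content of the theorem is thus the nonorientable case. Assume $S$ nonorientable of genus $g\geq 1$ and let $X$, $Y$ be two real rational models. By Theorem~\ref{thm.classif}(1), we may write $X\cong B_{\bar P}Q_{3,1}$ and $Y\cong B_{\bar Q}Q_{3,1}$ as iterated blow-ups at configurations $\bar P=(P_1,\dots,P_g)$ and $\bar Q=(Q_1,\dots,Q_g)$ of $g$ real points, possibly infinitely near. The strategy is to build a birational self-map $\phi\colon Q_{3,1}\dashrightarrow Q_{3,1}$ whose indeterminacy locus avoids $\sS^2=Q_{3,1}(\RR)$ (and likewise for $\phi^{-1}$), and which carries the configuration $\bar P$ onto $\bar Q$ in the strong sense that $\phi$ lifts successively through the towers of blow-ups, sending each $P_i$ to $Q_i$ together with the appropriate tangent direction when $P_{i+1}$ is infinitely near $P_i$. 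Such a $\phi$ then lifts to a birational diffeomorphism $X\dashrightarrow Y$ by the universal property of the blow-up, because $\phi$ behaves like a local isomorphism near each centre.

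The theorem therefore reduces to a transitivity lemma: the group of birational diffeomorphisms of the real rational model $Q_{3,1}$ acts transitively on such configurations of $g$ (possibly infinitely near) real points. For $g=1$ and $g=2$ this is clear from the rotation group $\SO(3)$, but $\SO(3)$ is not $3$-transitive on $\sS^2$, so for $g\geq 3$ genuinely birational self-diffeomorphisms are needed. A natural source is obtained by conjugating affine automorphisms of $\aA^2$ (such as translations) by the stereographic projection $\pi_N$ of Example~\ref{ex.first}(3): the resulting map on $Q_{3,1}$ is birational, its only indeterminacies lie in the complexification of $T_NQ_{3,1}$, which meets $\sS^2$ only at $N$, and a careful choice of $N$ outside the configuration gives a diffeomorphism of $\sS^2$. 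By composing such maps with rotations one can move one point at a time and, iterating on $g$, realise an arbitrary target configuration. Infinitely near points are dealt with by lifting $\phi$ to $B_{P_1}Q_{3,1}$ and repeating the same construction on the exceptional curve, where the group of birational diffeomorphisms of $\PP^1$ is (infinitely) transitive on $\sS^1=\PP^1(\RR)$.

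The main obstacle is precisely this transitivity on configurations, and more delicately the need to prescribe the action on tangent directions so as to handle infinitely near centres. Establishing it requires an explicit supply of birational self-diffeomorphisms of $Q_{3,1}$ going well beyond its automorphism group, and it is this circle of ideas that will be developed more systematically to yield the infinite transitivity statement of Theorem~\ref{thm.hm}.
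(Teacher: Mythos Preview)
Your overall architecture matches the paper's: dispose of the orientable case via Theorem~\ref{thm.classif}(2), and in the nonorientable case reduce via Theorem~\ref{thm.classif}(1) to blow-ups of $Q_{3,1}$ and then invoke a transitivity statement (ultimately Theorem~\ref{thm.hm}). The substantive divergence is in how you treat \emph{infinitely near} centres. The paper does not try to match such configurations directly; instead it first \emph{eliminates} them. The trick is Example~\ref{ex.p2s2}: $B_QQ_{3,1}$ is, as a real rational model, isomorphic to $\PP^2$, and under this identification the exceptional curve becomes a line. If $P\in E_Q$, one composes with a projectivity of $\PP^2$ sending $E_Q(\RR)$ to a line not through the image of $P$, then goes back to a model $B_{Q'}Q_{3,1}$ in which the image $P'$ of $P$ lies off the exceptional curve. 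Thus $B_P(B_QQ_{3,1})$ becomes, as a real rational model, a blow-up of $Q_{3,1}$ at two \emph{distinct} real points. Iterating, any model is brought to $B_{P_1,\dots,P_g}Q_{3,1}$ with the $P_i$ distinct, and only then is Theorem~\ref{thm.hm} applied.

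Your alternative --- carrying $\bar P$ to $\bar Q$ together with all tangent directions by a single birational diffeomorphism of $Q_{3,1}$, and handling infinitely near points by ``repeating the construction on the exceptional curve'' --- is where the gap lies. Two concrete problems: first, $\Bir(\PP^1)=\PGL(2)$ is only $3$-transitive on $\sS^1$, not infinitely transitive, so the parenthetical claim is false; more importantly, even $1$-transitivity on $E_P(\RR)$ is not obviously realised by birational diffeomorphisms of the ambient surface, which is what you actually need. Second, $\SO(3)$ is not even $2$-transitive on $\sS^2$ (it preserves distances); the algebraic automorphism group $\PO(3,1)$ is $3$-transitive, but you still need genuine birational diffeomorphisms beyond it for $g\geq 4$. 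In short, your plan to prescribe tangent (and higher) jets directly is plausible but unsubstantiated as written; the paper sidesteps the issue entirely by the $\PP^2$-model trick, so that Theorem~\ref{thm.hm} for \emph{distinct} points suffices.
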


\begin{proof}[Proof of Theorem~\ref{thm.classif}]
Apply the Minimal Model Program to $X$ in order to obtain a sequence of blow-ups 
$$
X_n\stackrel{\pi_n}{\longrightarrow} X_{n-1} \stackrel{\pi_{n-1}}{\longrightarrow} \cdots \stackrel{\pi_{1}}{\longrightarrow} X_0
$$
analogous to the one of  Proposition~\ref{prop.blowseq} except that we allow also blow-ups at pairs of nonreal points as in Example~\ref{ex.nonrealblow} and that $X_0$ is now one of the following (see \cite[p. 206, Theorem.~30]{Ko01real}):
\begin{enumerate}
\item a surface with nef canonical bundle;
\item a conic bundle $p\colon X_0 \to B$ over a nonsingular real algebraic curve with an even number of real singular fibers, each of them being real isomorphic to $x^2+y^2=0$;
\item a "del Pezzo" surface: $\PP^2$, $Q_{3,1}$ or a del Pezzo surface with non connected real locus;
\end{enumerate}

Since $X$ is rational, $X_0$ is rational and we proceed trough a case by case analysis:

1. Recall that a line bundle is \emph{nef} if the degree of its restriction to any curve is nonnegative and that a rational surface cannot satisfy such a condition, see e.g. \cite[\S3.2]{BPV}. 

2. Since $X_0$ is rational, the base curve $B$ of the conic bundle $p$ is rational, that is $B$ is isomorphic to $\PP^1$. The image of the real locus of $X_0$ by $p$ is a finite set of intervals of $B(\RR)\approx \sS^1$; each interval corresponding to a connected component of $X_0(\RR)$. Over an interior point of such an interval, a fiber of $p$ is smooth and its real locus is diffeomorphic to the circle. Over a boundary point, the real locus is a single point and outside these intervals, the real locus of a fiber is empty.  Since $X_0(\RR)$ is connected and nonempty, the number of real singular fibers of the conic bundle is $0$ or  $2$. If it is $2$,   $X_0(\RR)$ is then diffeomorphic to $\sS^2$. In fact $X_0$ is isomorphic to $Q_{3,1}$ blown-up at a pair of nonreal points (see 
\cite[Example 2.13(3)]{blm2} for details). This reduces to the case when $X_0$ is isomorphic to $Q_{3,1}$. If there is no real singular fibers, $X_0$ is isomorphic to a $\PP^1$-bundle over $\PP^1$. By \cite[Theorem~1.3]{Ma06}, $X_0(\RR)$ is then birationally diffeomorphic to the Klein bottle $(B_P\PP^2)(\RR)$, see \eqref{eq.klein} p.~\pageref{eq.klein},
or to the torus $(\PP^1\times \PP^1)(\RR)$.
If $S$ is orientable we are done, since $X(\RR)$ is orientable too, and $X$ is obtained from $X_0$ by blowing up at nonreal points only. If $S$ is nonorientable, then $\xr$ is nonorientable either, and $X$ is obtained from $X_0$ by blowing up, at least, one real point.  If $X_0=\PP^1\times \PP^1$, a blow-up of $X_0$ at one real point is isomorphic to a blow-up of $\PP^2$ at two real points and then is isomorphic as a real rational model to some blow-up of $Q_{3,1}$. The remaining case, $X_0$ is the blow-up of $\PP^2$ at one point, reduces to the next case. 

3. The real locus of a real rational surface being connected, this rules out del Pezzo surfaces with non connected real locus.
 
It remains to show that the statement of the theorem holds if $X_0$ is isomorphic to $\PP^2$ or to $Q_{3,1}$. If $X_0$ is isomorphic to $\PP^2$, then by Example~\ref{ex.p2s2}, the stereographic projection reduces to the case $X_0$ is isomorphic to $Q_{3,1}$ as a real rational model. Now if $S$ is orientable, then $X(\RR)$ is orientable too, and like in the torus case, $X$ is obtained from $X_0$ by blowing up at nonreal points only. It follows that $X$ is isomorphic to $Q_{3,1}$ as a real rational model.
If $S$ is nonorientable, then $\xr$ is nonorientable either, and it is obtained from $Q_{3,1}$ by blowing up at real points. \end{proof}

\begin{ex}[A real del Pezzo surface with non connected real locus.]
\label{ex.dp2}%
The surface defined by the affine equation (the specific values of the coefficients correspond to Figure~\ref{fig.dp2})
$$
z^2+(8x^4+20x^2y^2-24x^2+8y^4-24y^2+16,25)=0
$$ 
is the double cover of the plane ramified over a quartic curve. This is a real minimal del Pezzo surface of degree $2$ whose real locus is diffeomorphic to the disjoint union of four spheres.
\begin{figure}[ht] 
\centering
\includegraphics[scale=.2]{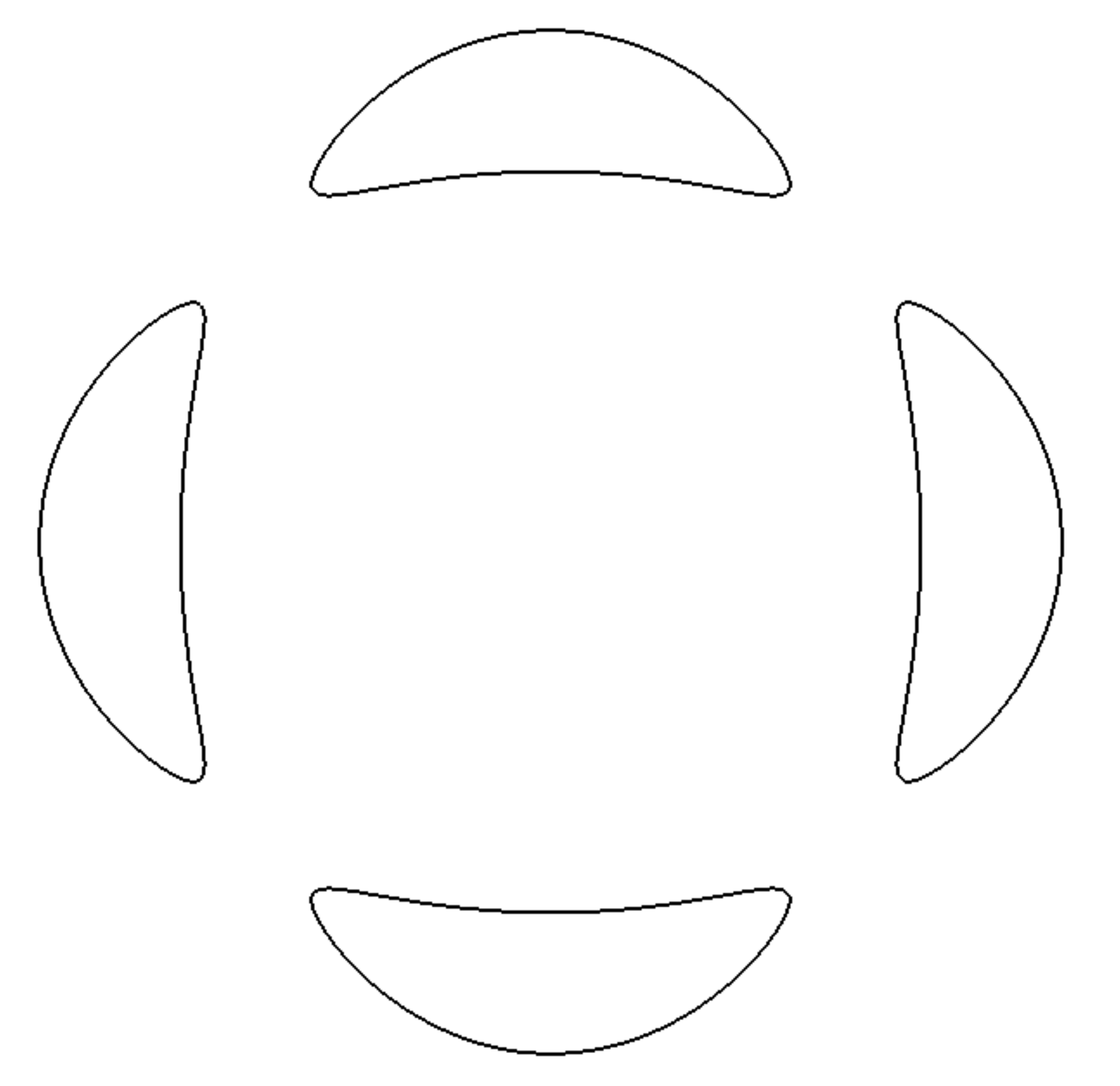}
\hskip 2cm
\includegraphics[scale=.35]{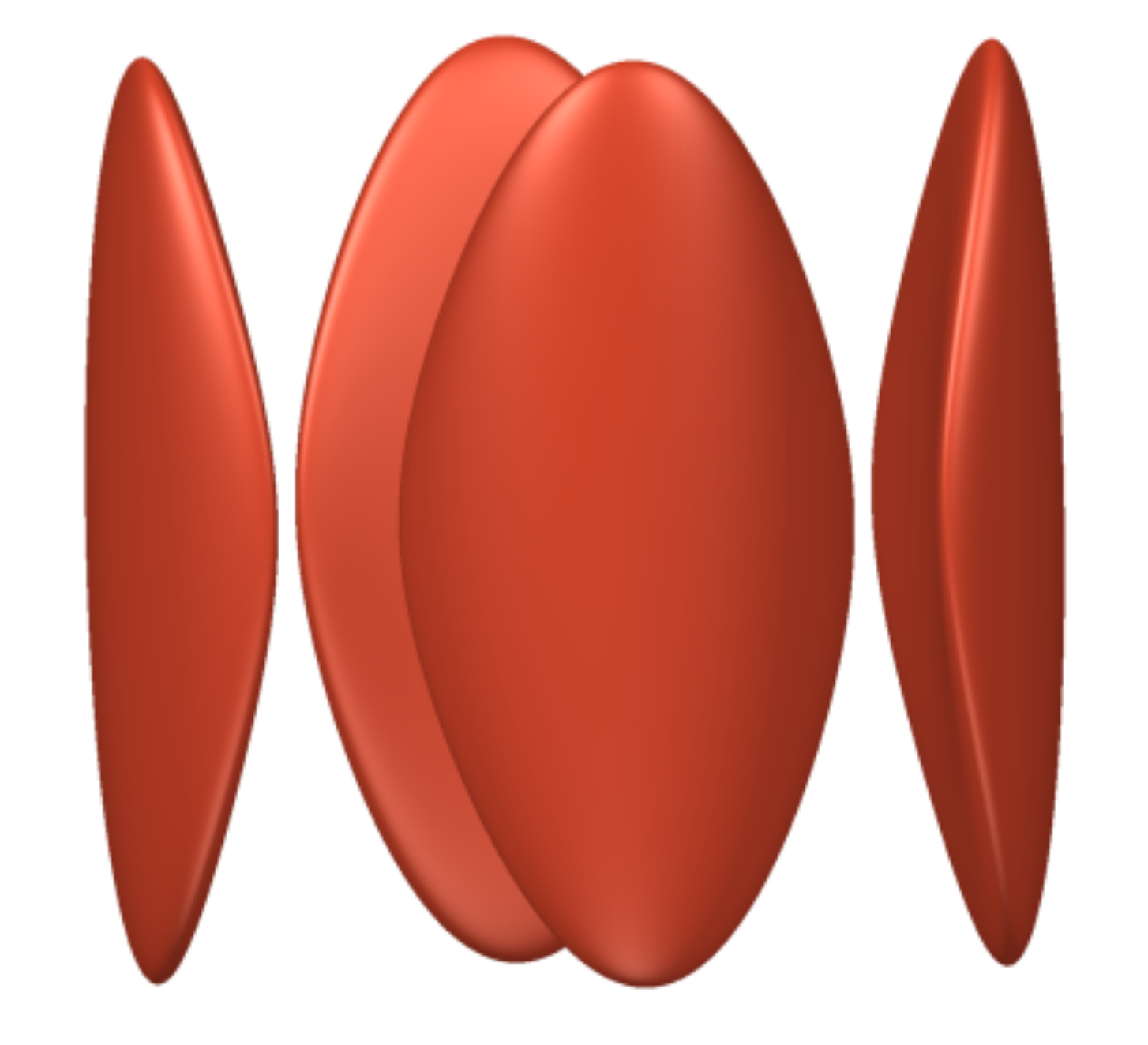}
\caption{On the left: the real locus of the real quartic curve given by $8x^4+20x^2y^2-24x^2+8y^4-24y^2+16,25=0$; on the right: the double plane ramified over it.}
        \label{fig.dp2}
\end{figure}
\end{ex}

\begin{proof}[Proof of Theorem~\ref{thm.bh}]
The proof which is given below is quite different from the one in \cite{BH07}; it is built on the fact that the group of self-birational diffeomorphisms of the sphere is \emph{infinitely transitive}, see Theorem~\ref{thm.hm} in the next section, this is the approach followed in \cite{hm3}.

The statement we want to prove is well-known if $S$ is the sphere or the torus. From the Comessatti's Theorem we may therefore assume that $S$ is a nonorientable surface.

First step. A crucial ingredient of the proof of Theorem~\ref{thm.bh} is the following. According to Theorem~\ref{thm.classif}, any real rational model $X$ of $S$ is isomorphic to a real rational model $Y$ of $S$ obtained from the sphere $\sS^2 = Q_{3,1}(\RR)$ by successively blowing up real points. This means that there is a sequence of blow-ups at real points
$$
Y=Y_n\stackrel{\pi_n}{\longrightarrow} \cdots \longrightarrow  Y_{2}\stackrel{\pi_{2}}{\longrightarrow}  Y_{1} \stackrel{\pi_{1}}{\longrightarrow} Y_0=Q_{3,1}\;.
$$

For simplicity, we describe this first step in the case $n=2$. Let $Q$ be a real point of $Q_{3,1}$ and let $P$ be a real point of the exceptional curve $E_Q$ of $\pi_1 \colon Y_1=B_QQ_{3,1}\to Q_{3,1}$.
If $Y_2=B_P(B_QQ_{3,1})$ is the blow-up of $Y_1$ at $P$, it is not \textit{a priori} clear that we can reduce to the case where $Y_2$ is the blow-up of $Q_{3,1}$ at two distinct points of $Q_{3,1}(\RR)=\sS^2$. One gets rid of this difficulty by using Example~\ref{ex.p2s2}. 
The algebraic surface $Y_1$ is a real rational model of $\PP^2(\RR)$ isomorphic to $\PP^2$, i.e. there is a birational diffeomorphism $f_Q\colon Y_1(\RR)\to\PP^2(\RR)$. 
Up to projectivities, we get moreover that for any real projective line $D$ of $\PP^2$, there is a birational diffeomorphism that maps the set of real points $E_Q(\RR)$ of the exceptional curve $E_Q$ to the real locus $D(\RR)$. Choose a real projective line $D(\RR)$ of $\PP^2(\RR)$ that does not contain the real point $f_Q(P)$ of $\PP^2$. 

There is a blow-up $Y_1'=B_{Q'}Q_{3,1}$ of the sphere at a real point, and a birational diffeomorphism $f_Q\colon Y_1'(\RR)\to \PP^2(\RR)$ mapping the real locus of the exceptional curve $E_{Q'}$ onto $D(\RR)$. Let $f={f_{Q'}}^{-1}\circ f_Q$ and $P'$
be the real point of $Y_1'$ corresponding to $P$ via the birational diffeomorphism $f \colon Y_1(\RR) \to Y_1'(\RR)$. 
Then the point $P'$ is not a point of the exceptional curve of the blow-up $\pi' \colon Y_1'=B_{Q'}Q_{3,1}\to Q_{3,1}$; which means that $\pi'$ maps isomorphically some affine neighborhood of $P'$ to an affine neighborhood of $\pi'(P')$. 

Since there is a birational diffeomorphism from $Y_1(\RR)$ to $Y_1'(\RR)$ that maps $P$ to $P'$, there is also a birational diffeomorphism from $Y_2(\RR)$ to $Y_2'(\RR)$, the real locus of the blow-up $Y_2'$ of $Y_1'$ at $P'$. Now, $Y_2'=B_{\pi'(P'),Q'}Q_{3,1}$ is the blow-up of $Q_{3,1}$ at 2 distinct real points, and is isomorphic as a real rational model to $Y_2=B_P(B_QQ_{3,1})$.

By an induction argument, one shows more generally that any real rational model $X$ of a nonorientable compact connected topological surface of genus $g$ is isomorphic to the blow-up $B_{P_1,\dots,P_g}Q_{3,1}$ where $P_1,\dots,P_g$ are $g$ distinct real points of the sphere.

Second step.
The second main ingredient of the proof is the fact that for any two $g$-tuples $(P_1,\dots,P_g)$ and $(Q_1,\dots,Q_g)$ of distinct elements of $\sS^2$, there is a a birational diffeomorphism $f\colon \sS^2\to \sS^2$ such that $f(P_j)=Q_j$ for all~$j$ (see Theorem~\ref{thm.hm} below).
Hence the blow-up $B_{P_1,\dots,P_g}Q_{3,1}$ is birationally diffeomorphic to the blow-up $B_{Q_1,\dots,Q_g}Q_{3,1}$. 
\end{proof}

\section{Automorphism groups of real loci}
The group of automorphisms of a compact complex algebraic variety is small: indeed, it is finite dimensional and even finite in most of the cases. And the same is true for the group of birational transformations of many varieties. 
On the other hand, the group $\Aut\bigl(\xr\bigr)$ of birational self-diffeomorphisms (also called automorphisms of $\xr$) of a real rational surface $X$ is quite big as the next results show.

\subsection{Transitivity}
Recall that a group $G$, acting on a set $M$, acts \emph{$n$-transitively} on $M$ if for any two $n$-tuples $(P_1,\dots,P_n)$ and $(Q_1,\dots,Q_n)$ of distinct elements of $M$, there is an element $g$ of $G$ such that $g\cdot P_j=Q_j$ for all $j$. 
The group $G$ acts \emph{infinitely transitively}
\footnote{In the literature, an \emph{infinitely} transitive group action is sometimes called a \emph{very} transitive action.} 
on $M$ if for every positive integer~$n$, its action is $n$-transitive on~$M$.
The next result is proved in \cite[Thm.1.4]{hm3}.

\begin{thm}
\label{thm.hm}%
Let $X$ be a nonsingular projective real rational surface. Then the group of birational diffeomorphisms $\Aut\bigl(\xr\bigr)$ acts infinitely transitively on $\xr$.
\end{thm}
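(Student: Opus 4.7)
The plan is to reduce the statement to the sphere $\sS^2 = Q_{3,1}(\RR)$ and then to prove infinite transitivity there directly.

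For the reduction, I would invoke Theorem~\ref{thm.classif}: every real rational surface $X$ is isomorphic, as a real rational model, either to $\PP^1\times\PP^1$, to $Q_{3,1}$, or to an iterated blow-up $B_{P_1,\ldots,P_g}Q_{3,1}$ of the sphere at real points; moreover a blow-up of $\PP^1\times\PP^1$ at one real point is isomorphic to a blow-up of $Q_{3,1}$ at two real points, so it is enough to handle the case $X = B_{P_1,\ldots,P_g}Q_{3,1}$ (the case $g=0$ included). A birational diffeomorphism $f$ of $\sS^2$ fixing the set $\{P_1,\ldots,P_g\}$ pointwise lifts canonically to a birational diffeomorphism of $X(\RR)$, acting on each exceptional circle $E_{P_j}(\RR)$ via the differential $df_{P_j}$ on tangent directions. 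Thus the sphere statement actually needed is infinite transitivity of the stabilizer of $\{P_1,\ldots,P_g\}$ on the complement, together with enough transitivity on tangent directions at each $P_j$ to handle those points of the prescribed $n$-tuples that lie on an exceptional divisor.

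The sphere statement itself I would prove by induction on $n$, the base case $n=1$ being immediate from the $\SO_3$-action on $Q_{3,1}$. For the inductive step, after using the inductive hypothesis to align $n-1$ of the points, it suffices to show that for any $n-1$ distinct points $P_1,\ldots,P_{n-1}\in\sS^2$ and any two further points $P,P'$ in the complement, there is a birational diffeomorphism of $\sS^2$ fixing each $P_i$ and sending $P$ to $P'$. I would construct this by stereographically projecting from a generic point $N$ (chosen outside all the points in play), reducing the question to the analogous transitivity problem on $\aA^2(\RR)=\RR^2$. The latter is solvable because the group of polynomial automorphisms of $\aA^2$ acts infinitely transitively on $\RR^2$---a classical consequence of Jung's theorem and the flexibility of the Jonqui\`eres subgroup.

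The main technical obstacle is that a generic polynomial automorphism of $\aA^2$ does \emph{not} extend to a $\cC^\infty$ diffeomorphism of $\sS^2$ at the projection center $N$: a Jonqui\`eres shear $(u,v)\mapsto(u,v+p(u))$ of degree $d\ge 2$, for example, collapses the tangent cone at $N$. To overcome this, I would either restrict to a class of polynomial automorphisms whose behavior at infinity is explicitly smooth---for instance, those built from affine transformations together with carefully chosen quadratic twists---or else alternate between stereographic projections from several generic points $N,N',N'',\ldots$, so that each individual move is verifiably smooth on a large open subset of $\sS^2$ and the problematic tangent directions at one center are cleaned up by the next move in the composition. A secondary subtlety is the promotion of point-transitivity to tangent-direction transitivity at the centers $P_j$ in the reduction step; this can be arranged by combining the $\SO_2$-rotations fixing a point inside $\SO_3$ with the polynomial-automorphism construction so as to prescribe both position and $1$-jet simultaneously.
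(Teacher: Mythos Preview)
Your core sphere argument has a genuine gap. You correctly identify that a Jonqui\`eres shear $(u,v)\mapsto(u,v+p(u))$ of $\aA^2$ fails to extend to a diffeomorphism of $\sS^2$ through the stereographic center, but the proposed fixes---restricting to unspecified ``carefully chosen quadratic twists'' or alternating projection centers---are not made precise and do not obviously close the gap. The requirement is that both the birational map and its inverse have \emph{no real indeterminacy point whatsoever}; already a non-conformal linear map of $\RR^2$ fails this after compactification to $\sS^2$, so the affine/Jonqui\`eres toolbox on $\aA^2$ is essentially unusable here. Without an explicit family of birational diffeomorphisms of $\sS^2$ rich enough to interpolate $n$ points, the proof does not go through.

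The paper bypasses this difficulty with a direct construction on $\sS^2$ itself. For any smooth rational map $f\colon[-1,1]\to\sS^1$, the \emph{twisting map} $\phi_f(x,y,z)=\bigl(f(z)\cdot(x,y),\,z\bigr)$, where $\cdot$ is complex multiplication in $\RR^2=\CC$, rotates each horizontal slice $\{z=c\}$ by $f(c)$; it is manifestly a birational diffeomorphism of all of $\sS^2$ with inverse $\phi_{1/f}$. Lagrange interpolation lets one prescribe $f(z_j)$ at finitely many heights, so a single $\phi_f$ carries any $n$ points to any $n$ others lying at the same respective heights; composing two such twists taken with respect to two transversal pencils of parallel planes then gives full $n$-transitivity. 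The paper's sketch treats only this sphere case and does not spell out the reduction from a general $X$, so your reduction via Theorem~\ref{thm.classif} is additional work rather than something to compare against; note however that $\PP^1\times\PP^1$ is not itself a blow-up of $Q_{3,1}$ and needs a separate argument, and that handling points lying on exceptional curves requires more than the $1$-jet remark you make.
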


\begin{proof}
In order to give an idea of the proof of the above theorem, let us show how one can construct many birational diffeomorphisms of the sphere $Q_{3,1}(\RR)\approx \sS^2$.
Let $I$ be the interval $[-1,1]$ in $\RR$. Let $\sS^1\subset \RR^2$ be the unit circle. Choose any smooth rational map $f
\colon I \to \sS^1$. This simply means that the two coordinate functions of $f$ are rational functions in one variable without poles in $I$.
Define a map $\phi_f \colon \sS^2 \to \sS^2$  ($\phi_f$ is called the \emph{twisting map} associated to $f$) by \label{p.twist}%
$$
\phi_f (x,y,z) = (f(z)\cdot(x,y),z)
$$
where $\cdot$ denotes complex multiplication in $\RR^2=\CC$; in other words, $f(z)\cdot(x,y)$ is a rotation of $(x,y)$ that depends algebraically on $z$.
The map $\phi_f$ is a birational self-diffeomorphism of~$\sS^2$. Indeed, its inverse is $\phi_g$ where $g\colon I \to \sS^1$ maps $z$ to the multiplicative inverse $(f(z))^{-1}$ of $f(z)$.
Now let $z_1,\dots,z_n$ be $n$ distinct points of $I$ and $\rho_1,\dots,\rho_n$ be elements of $\sS^1$. 
Then from Lagrange polynomial interpolation, there is a smooth rational map $f\colon I \to \sS^1$ such that $f(z_j)=\rho_j$ for $j=1,\dots,n$. 
The multiplication by $\rho_j$ is a rotation in the plane $z=z_j$, hence there exists a twisting map $\phi_f $ which \emph{moves} $n$ given distinct points $P_1,\dots,P_n$ on the sphere to $n$ another given points $R_1,\dots,R_n$ provided that each pair $P_j,R_j$ (same $j$) belong to an horizontal plane ($z=cst$). Let $(P_1,\dots,P_n)$ and $(Q_1,\dots,Q_n)$ be $n$-tuples of distinct elements of $\sS^2$.
To get a birational self-diffeomorphism mapping each $P_j$ to each $Q_j$, it suffices to consider two transversal families of parallel planes in order to get $n$ intersection points $R_j$, see Figure~\ref{fig.trans}. Then up to linear changes of coordinates, apply twice the preceding construction to get $2$ twisting maps, the first one mapping $P_j$ to $R_j$, $j=1,\dots,n$; the second one mapping $R_j$ to $Q_j$, $j=1,\dots,n$. Hence the composition of these twisting maps gives the desired birational self-diffeomorphism. 

\begin{figure}[ht]
\centering
\includegraphics[scale=1]{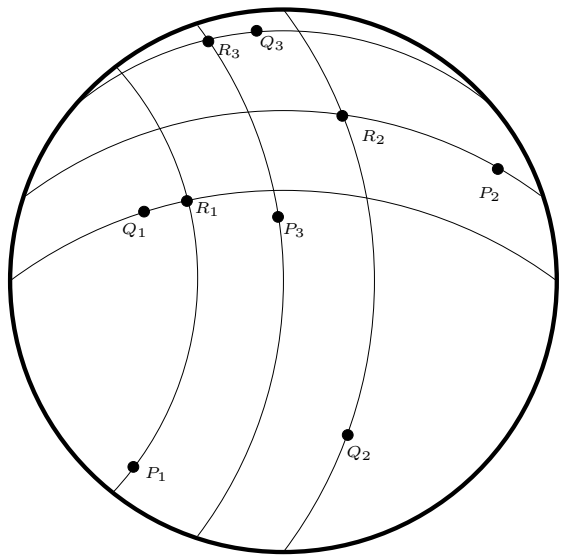}
\caption{The sphere $\sS^2$ with two sets of parallels.}
        \label{fig.trans}
\end{figure}
\end{proof}

\begin{rem}
By induction on the dimension, we can prove with this construction that in fact the group $\Aut\bigl(\sS^n\bigr)$ acts infinitely transitively on $\sS^n$ for $n>1$.
\end{rem}

Theorem~\ref{thm.hm} deals with real algebraic surfaces which are rational.
More generally, a real algebraic surface is \emph{geometrically rational}\label{geomrat} if the complex surface (that is the real surface forgetting the anti-holomorphic involution) contains a dense open subset complex isomorphic to $\aA^2(\CC)$. Clearly, a real rational surface is geometrically rational but the converse is not true. For example, the real del Pezzo surface of Example~\ref{ex.dp2} is a geometrically rational non rational surface. In the paper \cite[Thm.~1]{blm1}, the question of infinite transitivity of the automorphism group is settled for geometrically rational surfaces and in fact for all real algebraic surfaces. Below is one result of \textit{ibid}.

\begin{thm}
\label{thm.geomrat}%
Let $X$ be a real algebraic surface (smooth and projective as above). The group $\Aut\bigl(\xr\bigr)$ of birational diffeomorphisms is infinitely transitive on each connected component of $\xr$ if and only if $X$ is geometrically rational and the number of connected components satisfies $\#\pi_0(\xr)\leq 3$.
 \end{thm}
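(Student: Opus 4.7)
The plan is to establish both implications by combining Kollár's real Minimal Model Program (as used in the proof of Theorem~\ref{thm.classif}) with the twisting-map technique of Theorem~\ref{thm.hm}. The sufficient direction extends that technique to the minimal models compatible with at most three real connected components; the necessary direction exploits rigidity of the birational automorphism group when either hypothesis fails.

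For sufficiency, assume $X$ is geometrically rational with $\#\pi_0(\xr)\leq 3$. If $X$ is actually rational, Comessatti's Theorem~\ref{thm.com} forces $\xr$ to be connected, and Theorem~\ref{thm.hm} yields infinite transitivity. Otherwise I would reduce $X$ via MMP — through real blow-ups and blow-downs that are birational diffeomorphisms on $\xr$ — to a minimal model in a controlled list: real conic bundles over $\PP^1$ with a bounded number of real singular fibers, and del Pezzo surfaces of low degree having two or three connected real components (cf.\ Example~\ref{ex.dp2}). For each such model and each component $T$, I would adapt the twisting construction: the generic real fiber of a conic bundle over an arc of $\PP^1(\RR)$ is a circle, and a fiberwise rotation whose angle is a rational function of the base coordinate — prescribed at finitely many fibers via Lagrange interpolation — defines a birational self-diffeomorphism of $T$. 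Composing two such twists arising from transversal fibrations on $T$ then moves any $n$-tuple of distinct points to any other, exactly as in the sphere case of Theorem~\ref{thm.hm}.

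For necessity, suppose $\Aut(\xr)$ acts infinitely transitively on each component. First I would rule out $X$ non-geometrically-rational. If $X$ is not uniruled, every birational self-map descends to a biregular automorphism of the unique minimal model $X_{\min}$, and $\Aut(X_{\min})$ is an algebraic group with finite-dimensional identity component; hence its orbits in $\xr$ are unions of countably many submanifolds of bounded dimension, precluding $n$-transitivity for large $n$. If $X$ is uniruled but not geometrically rational, it is ruled over a curve $B$ with $g(B)\geq 1$; the ruling is canonical and preserved by every birational self-map, which therefore descends to an element of $\Aut(B)$. For $g(B)\geq 2$ the group $\Aut(B)$ is finite; for $g(B)=1$ it is at most $1$-transitive on $B(\RR)$. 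Either way, $2$-transitivity on $\xr$ fails already for two points lying in distinct fibers. Second, one must exclude $\#\pi_0(\xr)\geq 4$: by Kollár's classification, such geometrically rational $X$ are birational to minimal del Pezzo surfaces of degree $1$ or $2$ having their characteristic multi-spherical real loci (cf.\ Example~\ref{ex.dp2}), or to conic bundles over $\PP^1$ with many real singular fibers. In each case, the birational self-map group descends, via the anticanonical map or the conic bundle structure, to a finite or low-dimensional group acting on a branch divisor or on the base, whose induced action on a single component admits a non-constant invariant and therefore cannot be infinitely transitive.

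The principal obstacle is this last point, specifically for the del Pezzo surfaces of degree $\leq 2$: one must prove that no exotic birational diffeomorphism enlarges their automorphism group enough to realize infinite transitivity on any one of the four (or more) components. This requires a careful analysis of the birational geometry through the anticanonical double cover and its branch quartic (resp.\ sextic), ruling out twist-like constructions that could provide an analogue of Theorem~\ref{thm.hm} in the non-rational geometrically rational setting, and so closing the case distinction and yielding the claimed equivalence.
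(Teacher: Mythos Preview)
The paper does not contain a proof of this theorem: it is quoted from \cite[Thm.~1]{blm1} and stated without argument in this survey. There is therefore no ``paper's own proof'' to compare against here.

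On the merits of your outline: the overall architecture --- MMP reduction plus twisting for sufficiency, rigidity arguments for necessity --- is the right shape, but several steps are genuine gaps rather than routine verifications.

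In the sufficiency direction, your reduction ``through real blow-ups and blow-downs that are birational diffeomorphisms on $\xr$'' only lets you contract exceptional curves with \emph{empty} real locus; blowing down a curve with a real point changes the topology of $\xr$. So you do not reach the absolute minimal model, but only a model minimal among those with the same real topology. More seriously, for the non-rational geometrically rational models with two or three real components (minimal del Pezzo surfaces of degree $2$ or $4$, and conic bundles with $4$ or $6$ real singular fibres), you assert that each real component carries two transversal circle fibrations suitable for twisting. This is the heart of the matter and is not at all automatic: a single sphere sitting over one oval of a quartic branch curve does not obviously inherit a conic-bundle structure, let alone two transversal ones, and the precise bound $\#\pi_0(\xr)\le 3$ must emerge from exactly which models admit such structures. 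This step needs the actual classification work carried out in \cite{blm1}.

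In the necessity direction, your treatment of non-geometrically-rational $X$ is essentially correct. For $\#\pi_0(\xr)\ge 4$ you rightly flag the del Pezzo case as the principal obstacle, but your proposed mechanism --- that birational self-maps ``descend via the anticanonical map to a finite or low-dimensional group'' --- is not sufficient as stated. A birational self-diffeomorphism of $\xr$ need not be a biregular automorphism of the minimal del Pezzo $X_0$: it may factor through blow-ups at non-real points followed by contractions, and controlling all such Sarkisov links (and showing none of them yields enough flexibility on a single spherical component) is precisely the content you are missing. Your closing paragraph is honest about this, but it means the proposal is an outline of a strategy rather than a proof.
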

 
In the statement above,  the action of $\Aut\bigl(\xr\bigr)$ on $\xr$ is said to be infinitely transitive \emph{on each connected component} if for any pair of $n$-tuples of distinct points $(P_1,\dots,P_n)$ and $(Q_1,\dots,Q_n)$ of $\xr$ such that for each $j$, $P_j$ and $Q_j$ belong to the same connected component of $\xr$, there exists a birational diffeomorphism $f\colon \xr\to \xr$ such that $f(P_j)=Q_j$ for all~$j$.

\begin{rem}
The infinite transitivity of the automorphism groups of real algebraic varieties has been proved also for rational surfaces with mild singularities in \cite{hm4} (here, the infinite transitivity has to be understood on the regular part of the surface); and the question of infinite transitivity in the context of affine varieties is studied in \cite{kum1}.
\end{rem}

\subsection{Generators}
A closely related line of research studies generators of $\Aut\bigl(\xr\bigr)$ for various real rational surfaces $X$.
The classical Noether-Castelnuovo Theorem \cite{Ca01} (see also \cite[Chapter 8]{Al02} for a modern exposition of the proof) gives generators of the group $\Bir_\CC(\PP^2)$ of birational transformations of the complex projective plane. The group is generated by the  biregular automorphisms, which form the group $\Aut_\CC(\PP^2)= \PGL(3,\CC)$ of projectivities, and by the standard quadratic transformation 
$$
\sigma_0\colon [x:y:z]\dasharrow [yz:xz:xy].
$$

This result does not work over the real numbers. Indeed, recall that a \emph{base point} of a birational transformation is a (possibly infinitely near) point of indeterminacy; and note that two of the base points of the quadratic involution
$$
\sigma_1\colon [x:y:z]\dasharrow [y^2+z^2: xy: xz]
$$
are not real. Thus $\sigma_1$ cannot be generated by projectivities and $\sigma_0$. More generally, we cannot generate this way maps having nonreal  base-points.
Hence the group $\Bir_\RR(\PP^2)$ of birational transformations of the real projective plane is not generated by $\Aut_\RR(\PP^2)= \PGL(3,\RR)$  and $\sigma_0$.

The main result of \cite[Thm.~1.1]{blm2} is that $\Bir_\RR(\PP^2)$ is generated by $\Aut_\RR(\PP^2)$, $\sigma_0$, $\sigma_1$, and a family of birational maps of degree $5$ having only nonreal base-points:

\begin{ex}
\label{ex.quintic}%
Let $p_1,\bar{p_1},p_2,\bar{p_2},p_3,\bar{p_3}\in \PP^2$ be  three pairs of conjugated non-real points of $\PP^2$, not lying on the same conic. Denote by $\pi\colon X\to \PP^2$ the blow-up of the six points, it induces a birational diffeomorphism $\xr\to \PP^2(\RR)$. Note that $X$ is isomorphic to a smooth cubic surface in $\PP^3$, see e.g. \cite[Proposition~IV.9]{Be78}. The set of strict transforms of the conics passing through five of the six points provides   three pairs of non-real lines on the cubic, and the six lines are disjoint. The contraction of these six lines gives a birational morphism $\eta\colon X\to \PP^2$, inducing an isomorphism $\xr\to \PP^2(\RR)$, which contracts the curves onto three pairs of non-real points $q_1,\bar{q_1},q_2,\bar{q_2},q_3,\bar{q_3}\in \PP^2$; we choose the order so that $q_i$ is the image of the conic not passing through $p_i$.
The  map $\psi=\eta\pi^{-1}$ is a birational map $\PP^2\dasharrow \PP^2$ inducing a birational diffeomorphism $\PP^2(\RR)\to \PP^2(\RR)$. 

 Let $L\subset \PP^2$ be a general line of $\PP^2$. The strict transform of $L$ on $X$ by $\pi^{-1}$ has self-intersection $1$ and intersects the six curves contracted by $\eta$ into $2$ points (because these are conics). The image $\psi(L)$ has then six singular points of multiplicity $2$ and self-intersection $25$; it is thus a quintic passing through the $q_i$ with multiplicity $2$. The construction of $\psi^{-1}$ being symmetric as the one of $\psi$, the linear system of $\psi$ consists of quintics of $\PP^2$ having multiplicity $2$ at  $p_1,\bar{p_1},p_2,\bar{p_2},p_3,\bar{p_3}$. 
\end{ex}

The proof of Theorem~\ref{thm.gene} below is based on a extensive study of Sarkisov links. As a consequence, \cite{blm2} recovers the set of generators of $\Aut(\PP^2(\RR))$ given in \cite[Teorema II]{RV05} and the set of generators of $\Aut(Q_{3,1}(\RR))$ given in \cite[Thm.~1]{km1}. Before stating theses results, we define another family of birational maps.

\begin{ex}
\label{ex.cubic}%
Let $p_1,\bar{p_1},p_2,\bar{p_2}\in Q_{3,1}\subset \PP^3$ be two pairs of conjugated non-real points, not on the same plane of $\PP^3$. Let $\pi\colon X\to Q_{3,1}$ be the blow-up of these points. The non-real plane of $\PP^3$ passing through $p_1,\bar{p_2},\bar{p_2}$ intersects $Q_{3,1}$ onto a conic, having self-intersection $2$: two general different conics on $Q_{3,1}$ are the trace of hyperplanes, and intersect then into two points, being on the line of intersection of the two planes. The strict transform of this conic on $X$ is thus a $(-1)$-curve. Doing the same for the other conics passing through $3$ of the points $p_1,\bar{p_1},p_2,\bar{p_2}$, we obtain four disjoint $(-1)$-curves on $X$, that we can contract in order to obtain a birational morphism $\eta\colon X\to Q_{3,1}$; note that the target is $Q_{3,1}$ because it is a smooth projective rational surface of Picard number $1$. We obtain then a birational map $\psi=\eta\pi^{-1}\colon Q_{3,1}\dasharrow Q_{3,1}$ inducing an isomorphism $Q_{3,1}(\RR)\to Q_{3,1}(\RR)$.

Denote by $H\subset Q_{3,1}$ a general hyperplane section. The strict transform of $H$ on $X$ by $\pi^{-1}$ has self-intersection $2$ and has intersection $2$ with the $4$ contracted curves. The image $\psi(H)$ has thus multiplicity $2$  and   self-intersection $18$; it is then the trace of a cubic section. The construction of $\psi$ and $\psi^{-1}$ being similar, the linear system of $\psi$ consists of cubic sections with multiplicity $2$ at $p_1,\bar{p_1},p_2,\bar{p_2}$.
\end{ex}

\begin{thm}
\label{thm.gene}%
\begin{enumerate}
\item The group $\Bir_\RR(\PP^2)$ is generated by $\Aut_\RR(\PP^2)$, $\sigma_0$, $\sigma_1$, and by the quintic transformations of $\PP^2$ defined in Example~\ref{ex.quintic}.

\item The group $\Aut(\PP^2(\RR))$ is generated by 
$$
\Aut_\RR(\PP^2)=\PGL(3,\RR)
$$ 
and by the quintic transformations of $\PP^2$ defined in Example~\ref{ex.quintic}.

\item The group $\Aut(Q_{3,1}(\RR))$ is generated by 
$$
\Aut_\RR(Q_{3,1})=\PO(3,1)
$$  
and by the cubic transformations defined in Example~\ref{ex.cubic}.
\end{enumerate}
\end{thm}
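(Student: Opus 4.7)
The plan is to use the real Sarkisov program. Apply the relative Minimal Model Program over $\Spec\RR$ to factor any given $\psi \in \Bir_\RR(\PP^2)$ as a chain of elementary links between real Mori fiber spaces. By Kollár's classification recalled in the proof of Theorem~\ref{thm.classif}, the real rational Mori fiber spaces are $\PP^2$, $Q_{3,1}$, and conic bundles $p\colon X_0 \to \PP^1$ with $2k$ real singular fibers of type $x^2+y^2=0$ (including the $\PP^1$-bundles over $\PP^1$). Thus the problem reduces to describing all Sarkisov links between these surfaces, together with the biregular automorphism groups of each Mori fiber space.

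Next, classify the four types of real Sarkisov links (I–IV) starting from each Mori fiber space above. The key input is a control on the base points: by standard Sarkisov theory, each link is determined by the choice of a suitable extremal ray, and, over $\RR$, by the Galois orbit of the base points one blows up. For links from $\PP^2$, one expects to recover exactly the classical quadratic map $\sigma_0$ (links centred at three real points), the map $\sigma_1$ (one real point and a pair of conjugate points), and the quintic transformations of Example~\ref{ex.quintic} (three pairs of conjugate points in general position); links between other Mori fiber spaces are obtained by blowing up one or two pairs of conjugate points and should factor through the preceding ones via projective automorphisms. Put together, this shows that $\Bir_\RR(\PP^2)$ is generated by $\PGL(3,\RR)$, $\sigma_0$, $\sigma_1$, and the quintic maps, proving (1).

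For parts (2) and (3), restrict the Sarkisov factorisation to birational maps whose base points are all non-real, which is precisely the condition that $\psi$ is a birational diffeomorphism. Then $\sigma_0$ disappears (it has three real base points) and so do all links whose centre contains a real point; the only surviving links from $\PP^2$ are those centred on one pair of conjugate points (absorbed by $\sigma_1$, which however has one real base point and must be handled together with the quintic) and the Example~\ref{ex.quintic} quintics. A direct factorisation argument, using that $\sigma_1$ can be rewritten using quintics and projectivities once one works only inside $\Aut(\PP^2(\RR))$, then yields (2). For (3), the same analysis on $Q_{3,1}$ shows that the only links with no real base point are the birational involutions centred on two pairs of conjugate non-real points described in Example~\ref{ex.cubic}; combined with $\PO(3,1) = \Aut_\RR(Q_{3,1})$ this gives the generating set.

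The main obstacle is the enumeration and control of the real Sarkisov links: one has to check that the Galois-equivariant extremal rays on each Mori fiber space give rise only to the links listed above, and that each such link can indeed be realised by a composition of the candidate generators. This is essentially a case-by-case analysis on the number and configuration of real and non-real base points of bounded multiplicity (using Noether's inequality to bound the degree of the relevant linear systems), and is where the bulk of the technical work in \cite{blm2} is spent.
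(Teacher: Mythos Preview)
The paper does not actually prove this theorem: it is a survey and simply records that ``the proof of Theorem~\ref{thm.gene} below is based on an extensive study of Sarkisov links'', citing \cite{blm2} (with parts (2) and (3) attributed to \cite{RV05} and \cite{km1} respectively). Your proposal follows precisely this approach---a Sarkisov factorisation over $\RR$ followed by an enumeration of the possible links---so at the level the paper operates, you are entirely in line with it.

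That said, a couple of imprecisions in your sketch are worth flagging if you intend to develop this further. In your discussion of part (2), the handling of $\sigma_1$ is muddled: $\sigma_1$ has a real base point and is therefore not in $\Aut(\PP^2(\RR))$ at all, so there is nothing to ``rewrite using quintics and projectivities''. The actual point is that when all base points are non-real, the Sarkisov chain for an element of $\Aut(\PP^2(\RR))$ passes only through links with non-real centres; such links may leave $\PP^2$ (e.g.\ a link centred on one conjugate pair lands on $Q_{3,1}$), and one must show that any closed chain returning to $\PP^2$ decomposes into the quintic transformations and projectivities. Similarly, your statement that links from $\PP^2$ ``recover exactly $\sigma_0$, $\sigma_1$, and the quintics'' is an oversimplification: there are several link types (to Hirzebruch surfaces, to $Q_{3,1}$, etc.), and the reduction to the stated generators requires the case-by-case analysis you allude to in your final paragraph. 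Your last paragraph correctly identifies where the real work lies.
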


As remarked in \cite[Proposition~5.6]{blm2}, the twisting maps defined in the proof of Theorem~\ref{thm.hm} are compositions of twisting maps  of degree $1$ and $3$. And in the latter case the twisting maps belong to the set of cubic transformations defined in Example~\ref{ex.cubic}.

A new set of generators, completing the list for "minimal" real rational surfaces is also given \cite[Thm.~1.4]{blm2}:
\begin{thm}
The group $\Aut((\PP^1\times\PP^1)(\RR))$ is generated by 
$$
\Aut_\RR(\PP^1\times\PP^1)\cong\PGL(2,\RR)^2\rtimes \ZZ/2\ZZ
$$  
and by the birational involution
$$
\tau_0\colon ([x_0:x_1],[y_0:y_1])\dasharrow ([x_0:x_1],[x_0y_0+x_1y_1:x_1y_0-x_0y_1]).
$$
\end{thm}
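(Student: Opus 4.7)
The plan is to adapt the strategy used for parts (2) and (3) of Theorem~\ref{thm.gene}, relying on the real Sarkisov program developed in \cite{blm2}. Let $\phi \in \Aut\bigl((\PP^1\times\PP^1)(\RR)\bigr)$. Since $\phi$ restricts to a diffeomorphism of the real locus, the base points of $\phi$ and of $\phi^{-1}$ (including the infinitely near ones) all lie outside $(\PP^1\times\PP^1)(\RR)$ and hence come in pairs of complex conjugate non-real points. The goal is to factor $\phi$ as a product of elementary Sarkisov links and to show that every such link, after composition with an element of $\Aut_\RR(\PP^1\times\PP^1)$, is either biregular or coincides with the involution $\tau_0$.

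As a preliminary step, I would first identify $\tau_0$ itself as a Sarkisov link. A direct computation shows that its indeterminacy locus is the conjugate pair $\{(i,i),(-i,-i)\}$. Moreover, the identity $iy+1 = i(y-i)$ shows that on the non-real fiber $\{i\}\times\PP^1$ of the first projection, $\tau_0$ collapses everything to the point $(i,i)$, and symmetrically for the conjugate fiber. Thus $\tau_0$ is exactly the elementary transformation of the ruling $\PP^1\times\PP^1\to\PP^1$ centred at $\{(i,i),(-i,-i)\}$: blow up the two points, then contract the strict transforms of the two non-real fibers through them. Conjugating $\tau_0$ by elements of $\Aut_\RR(\PP^1\times\PP^1) \cong \PGL(2,\RR)^2\rtimes\ZZ/2\ZZ$ realises the analogous elementary transformation at any other pair of conjugate non-real points lying on a common real fiber of either ruling.

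I would then apply Koll\'ar's minimal model program, in the form used in the proof of Theorem~\ref{thm.classif}, to obtain a factorisation of $\phi$ through a sequence of Mori fibered real surfaces. Because $\phi$ induces a diffeomorphism of the torus $(\PP^1\times\PP^1)(\RR) \approx \sS^1\times\sS^1$ and every blow-up or contraction in the factorisation has non-real centre, each intermediate surface has torus real locus and must therefore be either $\PP^1\times\PP^1 = F_0$ or an even Hirzebruch surface $F_{2n}$ (possibly carrying conjugate pairs of non-real singular conic fibers). The elementary links between such surfaces are of two kinds: isomorphisms, which are absorbed in $\Aut_\RR$ -- the swap of the two rulings in particular being included there -- and elementary transformations at a conjugate pair of non-real points. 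After composing with suitable regular automorphisms, every such transformation of $F_0$ becomes $\tau_0$, and for $n>0$ one first descends from $F_{2n}$ to $F_0$ through a sequence of these links. An induction on the bi-degree of $\phi$ then finishes the argument, since applying a well-chosen conjugate of $\tau_0$ strictly decreases this bi-degree.

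The main obstacle I anticipate is the bookkeeping of infinitely near base points: one must show that every conjugate pair of non-real base points, together with the tangent data of its possible infinitely near neighbours, can be transported by a real biregular automorphism of $\PP^1\times\PP^1$ into the standard position $(\pm i,\pm i)$, where $\tau_0$ applies and strictly reduces the complexity. This requires a classification of the orbits of such pairs, with their infinitesimal data, under the action of $\PGL(2,\RR)^2\rtimes\ZZ/2\ZZ$, and constitutes the substantive combinatorial core of the argument, paralleling the analysis of Sarkisov links carried out in \cite{blm2} for $\PP^2$ and $Q_{3,1}$.
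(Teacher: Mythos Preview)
This survey does not prove the statement; it is quoted from \cite[Thm.~1.4]{blm2} with no argument given here. There is therefore no in-paper proof to compare your proposal against.

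Your outline does match the strategy of the source paper \cite{blm2}: run the real Sarkisov program, classify the elementary links with only non-real centre that can occur between Mori fibre spaces whose real locus is a torus, and show that each such link is, up to composition with a biregular automorphism, equal to $\tau_0$. One slip of phrasing: the base points $(i,i),(-i,-i)$ of $\tau_0$ do not lie on a ``common real fiber''; each sits on its own non-real fiber of the first ruling, and $\tau_0$ contracts that conjugate pair of fibers. Aside from that, your identification of $\tau_0$ as the real elementary link of the first ruling centred at this pair is correct, and since $\PGL(2,\RR)$ acts transitively on $\PP^1(\CC)\setminus\PP^1(\RR)$, conjugation by $\Aut_\RR(\PP^1\times\PP^1)$ does indeed reach the elementary link at an arbitrary conjugate pair of non-real points. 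The substantive work you flag at the end---tracking the intermediate conic bundles, handling infinitely near base points, and setting up the induction on degree---is exactly what \cite{blm2} carries out in detail.
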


\begin{rem}
Here is an analogous statement in the complex setting (see \cite{Is79,Is85}).
The group $\Bir(\PP^1\times\PP^1)$ is generated by 
$$
\Aut_\CC(\PP^1\times\PP^1)\cong\PGL(2,\CC)^2\rtimes \ZZ/2\ZZ
$$  
and by the birational involution
$$
e\colon ([x_0:x_1],[y_0:y_1])\dasharrow ([x_0:x_1],[x_0y_1:x_1y_0]).
$$
\end{rem}

\begin{rem}
For the interested reader, we put the stress on recent "real" results on Cremona groups: a rather complete classification of real structures on del Pezzo surfaces \cite{Ru02}; the study of the structure of some subgroups of the  real Cremona group \cite{Ro14} and \cite{Zi14}.
\end{rem}

\section{Approximation of differentiable maps by algebraic maps}
\subsection{Real algebraic models}
We have defined \emph{real rational models} of topological surfaces in Section~\ref{sec.real.rat}. More generally, let $M$ be a compact $\cC^\infty$-manifold without boundary; a real algebraic manifold $X$ is a \emph{real algebraic model} of $M$ if the real locus is diffeomorphic to $M$: 
$$
\xr\approx M \;.
$$

Clearly, a topological surface admitting a real rational model admits also a real algebraic model but the converse is not true. Indeed, by Comessatti's Theorem~\ref{thm.com}, an orientable surface of genus $g\geq2$ does not admit a real rational model but one of the two real algebraic surfaces given by the affine equations $z^2=\pm f(x,y)$, where $f$ is the product of equations of $g+1$ well chosen circles, is a real algebraic model\footnote{In fact, such a surface is not a manifold since it has nonreal singular points; but it is easy to get a manifold by "resolution" of these singular points or by a small deformation of the plane curve $f(x,y)=0$.} of a genus $g$ orientable surface. 

Note that the latter construction together with Corollary~\ref{cor.nor} proves that any compact topological surface admits a real projective model. In higher dimension, a striking theorem of Nash \cite{Nash52} improved by Tognoli \cite{To73} is the following (see \cite[Chapter 14]{BCR} for a proof):

\begin{thm}[Nash 1952, Tognoli 1973]
\label{thm.nash-to}%
Let $M$ be compact $\cC^\infty$-manifold without boundary, then there exists a nonsingular projective real algebraic variety $X$ whose real locus is diffeomorphic to $M$:
$$
M\approx X(\RR).
$$
\end{thm}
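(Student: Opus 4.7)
The plan is to combine Whitney's embedding theorem, a polynomial approximation argument à la Nash, a cobordism-theoretic trick due to Tognoli, and a resolution step to reach the projective statement.

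First, by Whitney's embedding theorem, realize $M$ as a smooth submanifold of some Euclidean space $\RR^N$ and fix a tubular neighborhood $T\subset \RR^N$ with smooth retraction $\rho\colon T\to M$. Inside $T$ the submanifold $M$ is cut out by the smooth submersion $\sigma(x)=x-\rho(x)$ taking values in a fixed normal subspace of codimension $k=N-\dim M$. Approximating $\sigma$ uniformly, together with its first derivatives, by polynomial maps $\sigma_\varepsilon$ yields, for $\varepsilon$ sufficiently small, a nonsingular real algebraic set $Z_\varepsilon\subset\RR^N$ one of whose connected components is diffeomorphic to $M$. Nonsingularity is arranged by a Sard-type transversality argument, and the required diffeomorphism is supplied by the normal projection $\rho$ restricted to that component. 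This is essentially Nash's 1952 theorem, which gives an affine algebraic model with possible extra components.

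The passage from Nash to Tognoli consists in removing the parasitic components. One invokes unoriented cobordism: by Thom's theorem, the unoriented cobordism ring is generated over $\FF_2$ by classes of nonsingular real projective algebraic varieties (products of real projective spaces and Dold manifolds $P(m,n)$). Consequently $M$ is cobordant to a disjoint union $V$ of such algebraic varieties via a smooth compact cobordism $W$. One embeds $W$ into $\RR^N\times[0,1]$ so that $W\cap(\RR^N\times\{1\})$ agrees with the given algebraic structure on $V$, and runs a relative polynomial approximation that fixes the algebraic end. The resulting nonsingular real algebraic set contains, as entire connected components, smooth copies of $M$ and of $V$; a pruning argument, exploiting that $M\sqcup V$ bounds and that $V$ is already algebraic, lets one separate the component diffeomorphic to $M$ as the real locus of its own nonsingular affine real algebraic variety $X_0$.

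Finally, to obtain a projective model, take the Zariski closure $\overline{X_0}\subset\PP^N$. Its real locus lies in the affine chart and is still identified with $M$, but $\overline{X_0}$ may acquire complex singularities along the hyperplane at infinity. Apply Hironaka's resolution of singularities to these complex singularities only; since they are disjoint from the real locus, the resolution $X\to\overline{X_0}$ is an isomorphism above $M$, and $X$ is a nonsingular projective real algebraic variety with $X(\RR)\approx M$. The main obstacle is the Tognoli pruning step: ensuring that the relative approximation procedure yields an algebraic variety whose real locus is exactly $M$, rather than $M$ together with extraneous components, requires a delicate construction transverse to the prescribed algebraic boundary and forms the technical heart of Tognoli's 1973 paper.
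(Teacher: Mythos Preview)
The paper does not give its own proof of this theorem: it merely states the result and refers the reader to \cite[Chapter~14]{BCR}. Your outline is precisely the standard Nash--Tognoli argument presented there (Whitney embedding, Nash's polynomial approximation giving $M$ as a component of a nonsingular algebraic set, Tognoli's cobordism trick via Thom's computation and Milnor's algebraic representatives, then projectivization and resolution of the complex singular locus at infinity), so in substance you are aligned with the reference the paper invokes.

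Two small imprecisions worth flagging. First, writing $\sigma(x)=x-\rho(x)$ as a map ``taking values in a fixed normal subspace of codimension~$k$'' presupposes a trivial normal bundle; in the actual argument one either works locally, or first doubles the embedding dimension to trivialize the normal bundle, or phrases Nash's step via the defining equations of a tubular neighborhood rather than a single normal-valued submersion. Second, when you pass to the projective closure and resolve singularities, you should say a word about why no new \emph{real} points appear at infinity (compactness of $M=X_0(\RR)$ in $\RR^N$) and why the resolution can be taken compatibly with complex conjugation (Hironaka's resolution is functorial enough to be performed $\Gal(\CC/\RR)$-equivariantly). With those caveats, your sketch is correct and matches the cited source; the paper itself offers nothing to compare against beyond that citation.
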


One of the most famous application of the Nash Theorem is the Theorem of Artin-Mazur \cite{AM65} below.
For any self-map $f \colon M \to M$, denote by $N_\nu(f)$ the number of \emph{isolated} periodic points of $f$, of period $\nu$ (\textit{i.e.}, the number of isolated fixed points of $f^\nu$).
\begin{thm}
Let $M$ be a compact $\cC^\infty$-manifold\footnote{In fact, the following results are valid for any $\cC^k$-regularity, $k=1,\dots,\infty$.} without boundary, and let $F(M)$ be the space of $\cC^\infty$-self maps of $M$ endowed with the $\cC^\infty$-topology. There is a dense subset $\cE\subset F(M)$ such that if $f\in \cE$, then $N_\nu(f)$ grows at most exponentially (as $\nu$ varies through the positive integers).
\end{thm}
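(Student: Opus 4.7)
The plan is to let $\cE$ consist of those $\cC^\infty$-self maps of $M$ that arise as restrictions of regular self-maps of a real algebraic model of~$M$, and to bound the periodic point count for such maps by Bezout's theorem. First, I would invoke Theorem~\ref{thm.nash-to} (Nash-Tognoli) to realize $M$ as the real locus of a nonsingular projective real algebraic variety $X$, and then (after passing to an affine chart, or using the affine version of Nash's theorem) embed $M = X(\RR)$ as a nonsingular algebraic subset of some $\RR^k$. The candidate for $\cE$ then consists of those $f \in F(M)$ which are regular as self-maps of the algebraic variety $M \subset \RR^k$.

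Next, I would establish density of $\cE$ in $F(M)$. Given $f \in F(M)$ and $\epsilon > 0$, Stone-Weierstrass applied coordinate-wise yields a polynomial map $\tilde f \colon \RR^k \to \RR^k$ whose restriction to $M$ is $\epsilon/2$-close to $f$ in the $\cC^\infty$-topology. For $\epsilon$ small enough, $\tilde f(M)$ is contained in a tubular neighborhood $U$ of $M$ in $\RR^k$. Composing with an algebraic (Nash) retraction $r \colon U \to M$, whose existence follows from the Nash tubular neighborhood theorem for nonsingular algebraic subsets of $\RR^k$ (see Chapter~14 of \cite{BCR}), produces a regular self-map $g = r \circ \tilde f|_M$ of $M$ that is $\epsilon$-close to $f$ and therefore lies in $\cE$.

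Finally, I would bound $N_\nu(g)$ for $g \in \cE$ using Bezout. If $g$ is given by polynomials of degree at most $d$, then $g^\nu$ is given by polynomials of degree at most $d^\nu$, and the isolated fixed points of $g^\nu$ lie among the isolated points of the algebraic set $\{x \in M \st g^\nu(x) = x\}$, which is the intersection of the graph of $g^\nu$ with the diagonal in $X \times X$. Bezout's theorem, applied in the complexification $X_\CC \times X_\CC$, bounds the number of isolated intersection points by $C \cdot (d^\nu)^{\dim M}$, where $C$ depends only on the ambient projective embedding of $X$. Hence $N_\nu(g) \leq C \cdot (d^{\dim M})^\nu$ grows at most exponentially in $\nu$, as required.

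The hard part will be the density step: Stone-Weierstrass alone produces polynomial maps into $\RR^k$, not into $M$, so the retraction onto $M$ must itself be algebraic (or at least Nash) for the resulting approximation to lie in $\cE$. Proving the existence of such a tubular neighborhood retraction is the technical core of the argument; it uses a relative form of the Nash-Tognoli approximation machinery, and once it is in place the Bezout counting step is essentially routine.
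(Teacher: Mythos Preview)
The paper does not actually prove this theorem; its entire commentary is the single sentence ``the proof of Artin-Mazur's Theorem uses the fact that any $\cC^\infty$-self map of $M$ has an approximation by Nash morphisms.'' Your outline is the classical Artin--Mazur argument and matches that hint.

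One point does need correction, and it is not merely terminological. The tubular-neighborhood retraction $r\colon U\to M$ supplied by the Nash theory is a \emph{Nash} map (real-analytic with semialgebraic graph), not a regular map: already for $M=\sS^n$ the retraction $x\mapsto x/|x|$ involves a square root. Hence $g=r\circ\tilde f|_M$ is Nash but is not ``given by polynomials of degree at most $d$,'' and your line ``$g^\nu$ is given by polynomials of degree at most $d^\nu$'' is not available. The repair is the one Artin and Mazur actually use: since $g$ is Nash, its graph $\Gamma_g$ is a connected component of a real algebraic subset of $M\times M$, hence lies inside a complex algebraic correspondence $Z\subset X_\CC\times X_\CC$ of some fixed degree $D$; the graph of $g^\nu$ then lies in the $\nu$-fold self-composite $Z^{\circ\nu}$, and a Bezout-type estimate on $Z^{\circ\nu}\cap\Delta$ gives $N_\nu(g)\le C\,D^\nu$. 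If you redefine $\cE$ to be the set of Nash self-maps of $M$ (rather than regular ones), your density argument goes through unchanged and the conclusion follows.
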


\subsection{Automorphism groups}

The proof of Artin-Mazur's Theorem uses the fact that any $\cC^\infty$-self map of $M$ has an approximation by Nash morphisms, see e.g. \cite[Chapter 8]{BCR}. We want to stress here a big gap between Nash diffeomorphisms and  birational diffeomorphisms. A diffeomorphism which is also a rational map without poles on the real locus is a Nash diffeomorphism but not necessarily a birational diffeomorphism. Indeed, the converse diffeomorphism is not always rational. For instance the map $x \mapsto x+x^3$ is a Nash self-diffeomorphism of $\RR$ but it is not birational since the converse map has radicals. This is a consequence of the fact that the Implicit function Theorem holds in analytic setting but does not hold in the algebraic setting.

The question has been raised whether the group $\Aut\bigl(\xr\bigr)$ is dense in the group $\Diff\bigl(\xr\bigr)$ of all self-diffeomorphisms of $\xr$, for a real rational surface $X$. This turns out to be true and has been proved in \cite{km1}.

\begin{thm}\cite[Theorem~4]{km1}\ \hphantom{)}\
\label{thm.km1}%

Let $S$ be a compact connected topological surface and $\Diff(S)$ its group of self-diffeomorphisms endowed with the $\cC^\infty$-topology.
If $S$ is nonorientable or of genus $g(S)\leq 1$,  then there exists a real rational model $X$ of $S$ such that
$$
\overline{\Aut\bigl(\xr\bigr)}=\Diff\bigl(\xr\bigr)
$$

i.e. $\Aut\bigl(\xr\bigr)$ is a dense subgroup of $\Diff\bigl(\xr\bigr)$ for the $\cC^\infty$-topology.
\end{thm}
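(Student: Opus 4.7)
The plan is to reduce the density statement to the case of the sphere $\sS^2 = Q_{3,1}(\RR)$ and then to prove it there by a fragmentation-plus-local-approximation argument built on the twisting maps from the proof of Theorem~\ref{thm.hm}. By Theorem~\ref{thm.bh}, every $S$ satisfying the hypothesis has, up to isomorphism, a single real rational model obtained from $Q_{3,1}$ (or from $\PP^1\times\PP^1$ in the torus case) by a finite sequence of real blow-ups. The torus case can be treated by analogous methods with $\PGL(2,\RR)^2$ replacing $\PO(3,1)$, so I focus on models $X = B_{P_1,\dots,P_g}Q_{3,1}$ (including $g=0$). Given $\varphi\in\Diff(\xr)$, the $g$ exceptional circles $E_{P_i}(\RR)$ are mapped to $g$ disjoint embedded circles in $\xr$; invoking Theorem~\ref{thm.hm} together with a small isotopy, one can pre- and post-compose $\varphi$ with a birational diffeomorphism so that the result fixes each $E_{P_i}(\RR)$ setwise, hence descends through the blow-ups to a diffeomorphism of $\sS^2$ fixing the set $\{P_1,\dots,P_g\}$. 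A $\cC^\infty$-approximation of this descent by elements of $\Aut(\sS^2)$ stabilizing $\{P_1,\dots,P_g\}$ then lifts through the blow-ups and provides the desired approximant of $\varphi$.

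For the sphere itself, since $\pi_0(\Diff(\sS^2)) = \ZZ/2\ZZ$ and the nontrivial class is realized already by reflections in $\PO(3,1) \subset \Aut(\sS^2)$, it suffices to approximate elements of the identity component of $\Diff(\sS^2)$. Using the standard fragmentation lemma, I would write any such $\varphi$ sufficiently close to the identity as a finite product $\varphi = \varphi_1\circ\cdots\circ\varphi_k$ with each $\varphi_i$ supported in a small geodesic disk $D_i$, reducing the problem to the local one of approximating each $\varphi_i$. Choosing coordinates so that $D_i$ lies in a cap $\{(x,y,z)\in\sS^2 \st z\in I\}$ for a closed subinterval $I\subset(-1,1)$, the plan is to realize $\varphi_i$ as a composition $\phi_f\circ\phi_g'$ of two twisting maps along two transversal axes, with smooth $f,g\colon I\to\sS^1$. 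An Implicit Function Theorem argument, carried out slice by slice, produces such $f,g$ when $\varphi_i$ is close enough to the identity; and then a Lagrange-type rational interpolation of $f,g$ into $\sS^1$ (exactly as in the proof of Theorem~\ref{thm.hm}) replaces $\varphi_i$ by a birational diffeomorphism as close as desired in the $\cC^\infty$-topology.

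The main obstacle I expect lies in the local step: one must verify that the two twisting families genuinely generate a full $\cC^\infty$-neighborhood of the identity in the diffeomorphism group of $D_i$, with quantitative control on all $\cC^k$-norms uniform in the slice parameter $z$. The failure of the Implicit Function Theorem in the algebraic category (as highlighted by the example $x\mapsto x+x^3$ immediately preceding the theorem) forces the approximation to be carried out at the smooth level first, and only then promoted to rational data; simultaneously, one has to guarantee that the resulting rational maps remain globally invertible, so that the approximants live in $\Aut(\xr)$ itself and not merely in the larger monoid of Nash self-maps. Combining these uniform estimates across the $k$ fragmented pieces, while keeping the total error under control, is the analytic heart of the proof.
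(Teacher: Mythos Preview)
Your reduction from $X=B_{P_1,\dots,P_g}Q_{3,1}$ to the sphere has a genuine gap. You claim that, after pre- and post-composing by birational diffeomorphisms, an arbitrary $\varphi\in\Diff(\xr)$ can be arranged to fix each exceptional circle $E_{P_i}(\RR)$ setwise, and you justify this by ``Theorem~\ref{thm.hm} together with a small isotopy''. But Theorem~\ref{thm.hm} is transitivity on \emph{points}, not on embedded one-sided circles; moving $\varphi(E_{P_i}(\RR))$ back to $E_{P_i}(\RR)$ by a birational diffeomorphism is essentially a mapping-class-group statement, and the ``small isotopy'' you invoke is a smooth diffeomorphism, so using it here is circular. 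Even granting that $\varphi$ fixes each $E_{P_i}(\RR)$ setwise, the induced map on $\sS^2$ is only a homeomorphism at the $P_i$; smoothness there is not automatic, so the descent you need is not available.

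The paper organizes the blow-up case differently and avoids both issues. First, density on $\sS^2$ plus infinite transitivity give density of $\Aut(\sS^2,P_1,\dots,P_g)$ in $\Diff(\sS^2,P_1,\dots,P_g)$. Second, fragmentation is applied on $\xr$ itself (not on $\sS^2$): one covers $\xr$ by opens $W_j$, each carried by a birational diffeomorphism $f_j$ to the complement of the exceptional locus of some blow-up of $\sS^2$; a $\phi\in\Diff_0(\xr)$ is written as a product of pieces supported in the $W_j$, each of which descends (smoothly, because the support avoids the exceptional locus) and is then approximated on $\sS^2$ fixing marked points. Third, the quotient $\Diff(\xr)/\Diff_0(\xr)$ is handled by the separate Theorem~\ref{thm.mod}, which realizes Dehn twists and cross-cap slides by explicit birational diffeomorphisms. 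Your single ``descent'' step tries to do all three at once and does not go through as written.

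For the sphere itself, your plan (fragmentation plus local approximation by pairs of twisting maps) is a genuinely different route from the paper's. The paper does not attempt any direct local estimate; it quotes Lukacki\u{\i}'s theorem that $\SO(3,1)$ is a maximal closed subgroup of $\Diff_0(\sS^2)$, so that $\oO(3,1)$ together with a single nontrivial twisting map already generates a dense subgroup. Your approach would be more self-contained if the local step could be made rigorous, but the obstacle you yourself flag --- showing that two transversal twisting families generate a full $\cC^\infty$-neighborhood of the identity with uniform control --- is substantial and is exactly what Lukacki\u{\i}'s maximality circumvents.
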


\begin{rem}
\label{rem.nondensity}%
If $S$ is orientable of genus $g(S)\geq 2$, then for any real algebraic model $X$ of $S$, we have
$\overline{\Aut\bigl(\xr\bigr)}\ne\Diff\bigl(\xr\bigr)$.
Let $X$ be a real algebraic surface with orientable real locus. Then following up the classification of surfaces (see e.g. \cite{BPV,Si89}):
if $X$ is geometrically rational or ruled, then $\xr \approx \sS^2$ or $\xr \approx \sS^1\times \sS^1$;  
if $X$ is  K3 or abelian, then $\Aut(\xr)$ preserves a volume form, hence density does not hold;
if $X$ is Enriques or bi-elliptic, it admits a finite cover by one surface in the former case, hence density does not hold;
if $X$ is properly elliptic, then $\Aut(\xr)$  preserves a fibration, hence density does not hold;
if $X$ is of general type, then $\Aut(\xr)$  is finite, hence density does not hold.
Summing up, if $g(S)>1$, then for any real algebraic model, density does not hold.
\end{rem}

\begin{rem}
Thanks to \cite[Thm.~2]{Lu77}, see below, the group $\Aut\bigl(\sS^n\bigr)$ is a dense subgroup of $\Diff\bigl(\sS^n\bigr)$  for $n>1$.
\end{rem}

\begin{proof}[Sketch of proof]
Any such topological surface admits a real rational model which is $\PP^1\times \PP^1$, $Q_{3,1}$ or the blow-up $B_{P_1,\dots,P_g}Q_{3,1}$ where $P_1,\dots,P_g$ are $g$ distinct real points of the sphere. Leaving aside the torus case for simplicity, we start with a theorem of Lukacki\u{\i} to the effect that the density holds for the sphere. Recall that for a given topological group $G$, the connected component containing the identity element is called the \emph{identity component} of $G$ and is denoted by $G_0$. 
The paper \cite[Thm.~2]{Lu77}  proves indeed that for any integer $n>1$, the topological group $\SO(n+1,1)$ is a maximal
closed subgroup of the identity component $\Diff_0(\sS^n)$ of $\Diff(\sS^n)$, meaning that any topological subgroup of the topological group $\Diff_0(\sS^n)$ that contains strictly $\SO(n+1,1)$ is dense in $\Diff_0(\sS^n)$. 
Consequently, any subgroup of $\Diff(\sS^n)$ that contains   strictly
$\oO(n+1,1)$ is dense in $\Diff(\sS^n)$. Thanks to this argument, we prove that $\overline{\Aut\bigl( \sS^n\bigr)}=\Diff\bigl( \sS^n\bigr)$ for $\xr\approx \sS^n$, $n>1$. For the case we are concerned with, the group $\oO(3,1)$ together with any nontrivial twisting map\footnote{See p.~\pageref{p.twist}} of $\sS^2$ generate a dense subgroup of $\Diff(\sS^2)$.

The remaining cases are the nonorientable surfaces $B_{P_1,\dots,P_g}Q_{3,1}(\RR)$. Let $X=B_{P_1,\dots,P_g}Q_{3,1}$. The proof is in three steps:

\begin{enumerate}
\item (Marked points).
Let $f$ be a self-diffeomorphism of $\sS^2$.
Let $f'$ be a birational self-diffeomorphism of $\sS^2$ close to $f$ given by density. Then the point $Q_j=f'(P_j)$ is close to $P_j$ for $j=1,\dots,g$.
By Theorem~\ref{thm.hm}, we get a birational self-diffeomorphism $h$ such that $P_j=h(Q_j)$ for $j=1,\dots,g$. Moreover, the construction of such a $h$ shows that $h$ is close to identity. Thus, starting with a map $f'$ closer to $f$ if needed, we get that the group 
$\Aut(\sS^2, P_1,\dots, P_g)$ of birational self-diffeomorphisms of $\sS^2$ fixing each $P_j$ is dense in the group 
 $\Diff(\sS^2, P_1,\dots, P_g)$ of self-diffeomorphisms of $\sS^2$ fixing each $P_j$. 
 
\item (Identity component).

The infinite transitivity of $\Aut(\xr)$ gives many birational diffeomorphisms from $\xr$ to the sphere blown-up at $g$ points and in particular 
there is a finite open cover $\xr = \cup_j W_j$ 
such that for every $j$ 
there are $g$ distinct points
$P_{1j},\dots,P_{gj}\in \sS^2$ and a birational diffeomorphism
$f_j\colon \xr\to B_{P_{1j},\dots,P_{gj}}\sS^2$ such that $f_j(W_j)$ 
avoids the exceptional locus of $\pi_j\colon B_{P_{1j},\dots,P_{gj}}\sS^2 \to \sS^2$.  

Let $\phi\in \Diff_0(\xr)$.
By the Fragmentation Lemma, see \cite[Lemma~3.1]{PS70},  we can write $\phi=\phi_1\circ \dots \circ \phi_r$ such that
each $\phi_j$ is the identity outside $W_j$.
By use of $f_j$, each $\phi_j$ descends to $\phi_j'\in \Diff_0(\sS^2,P_{1j},\dots,P_{gj})$. By the previous point, we can consider an approximation of $\phi_j'$ in $\Aut_0(\sS^2,P_{1j},\dots,P_{gj})$ and lift it to $\xr$. So far we deduce the density of the identity component $\Aut_0(\xr)$ in  $\Diff_0(\xr)$.

\item (Mapping class group).
This is the main step. To get the conclusion we use the fact that  the \emph{modular group} 
$$
\Mod(\xr)=
 \Diff(\xr)/\Diff_0(\xr)
 $$ 
 (also called the \emph{mapping class group}) is generated by birational self-diffeomorphisms of $\xr$, see Theorem~\ref{thm.mod} below.
\end{enumerate} 
 \end{proof} 

Let $X$ be a real algebraic model of a topological surface $S$, then
the modular groups $\Mod(S)$ and $\Mod(\xr)$ are isomorphic.

\begin{thm}\cite[Theorem~27]{km1}
\label{thm.mod}%

Let $S$ be a nonorientable compact connected topological surface. 
Then there exists a real rational model $X$ of $S$ such that the group homomorphism
$$
\pi \colon \left\{
\begin{array}{ccc}
\Aut(\xr) &\longrightarrow &\Mod(\xr) \\
f  &\longmapsto &[f]
\end{array}
\right.
$$
is surjective. 
\end{thm}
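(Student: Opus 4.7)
The plan is to take as model $X=B_{P_1,\dots,P_g}Q_{3,1}$ the specific real rational surface produced by Theorem~\ref{thm.classif}, with the $g$ real blown-up points chosen in a convenient configuration (for instance on a common parallel of $\sS^2$ avoiding the poles), and to realize a finite generating set of $\Mod(\xr)$ by explicit birational self-diffeomorphisms of $X$. Because $\xr$ is diffeomorphic to the nonorientable closed surface $N_g$ of genus $g$, I can invoke a classical generating result (Lickorish, later refined by Chillingworth, Korkmaz and Szepietowski): $\Mod(N_g)$ is generated by (a) the Dehn twists along a finite system of two-sided simple closed curves and (b) the permutations of the $g$ crosscaps (``crosscap transpositions''), together with, if one uses the shortest list, a single crosscap slide. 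Working with (a)+(b) is advantageous because both classes of generators translate naturally into algebraic operations on $B_{P_1,\dots,P_g}Q_{3,1}$.

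For the permutations of crosscaps, I would use Theorem~\ref{thm.hm}: since $\Aut(\sS^2)$ acts infinitely transitively on $\sS^2$, for any permutation $\sigma\in\Sigma_g$ there exists a birational self-diffeomorphism $h_\sigma\colon\sS^2\to\sS^2$ with $h_\sigma(P_i)=P_{\sigma(i)}$ for every $i$. Lifting $h_\sigma$ through the blow-up $\pi\colon X\to Q_{3,1}$ yields an element of $\Aut(\xr)$ that permutes the exceptional curves $E_{P_i}$, hence permutes the crosscaps according to $\sigma$. This realizes all generators of type~(b) and in particular all the crosscap transpositions.

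For the Dehn twists, I would exploit the twisting maps $\phi_f$ of p.~\pageref{p.twist}. Choosing $f\colon[-1,1]\to\sS^1$ to be constant on neighborhoods of $\pm1$ and to wind once around $\sS^1$ on an intermediate band $\{a\leq z\leq b\}$ produces a birational self-diffeomorphism of $\sS^2$ whose mapping class is a Dehn twist along the parallel $\{z=c\}$ for any $c\in(a,b)$, and which is the identity near the poles. By first applying a permutation-of-crosscaps map provided by Theorem~\ref{thm.hm} I can arrange that the blown-up points $P_1,\dots,P_g$ all lie outside the band $\{a\leq z\leq b\}$; then $\phi_f$ lifts through $\pi$ to a birational self-diffeomorphism of $X$ whose mapping class on $\xr$ is a Dehn twist along the chosen parallel. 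Since any two-sided simple closed curve in $\xr$ is, up to a crosscap permutation, ambient isotopic to such a parallel avoiding the $P_i$, every Dehn twist in the generating set~(a) is obtained this way after conjugation by elements of $\Aut(\xr)$ already produced in the previous paragraph.

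The main obstacle is the crosscap slide, the only generator not of type (a) or (b) in the shortest generating set; equivalently, if one prefers the longer (a)+(b) list, the obstacle is showing that crosscap transpositions together with Dehn twists really generate $\Mod(N_g)$, which one can extract from the literature but must be checked in the precise normalization used here. To realize a crosscap slide directly, I would combine a twisting map $\phi_f$ whose rotation performs a nontrivial loop in $\sS^1$ along a path passing through some $P_i$ with an auxiliary blow-up/blow-down along the exceptional curve $E_{P_i}$, so as to slide one crosscap along a one-sided loop; the verification that the isotopy class of this algebraic map coincides with the topological Y-homeomorphism (for instance by computing the induced action on $H_1(\xr;\ZZ/2\ZZ)$ and on the orientation double cover) is the delicate point of the argument. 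Once the generators of $\Mod(\xr)$ are all realized, surjectivity of $\pi\colon\Aut(\xr)\to\Mod(\xr)$ follows.
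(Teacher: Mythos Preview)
Your overall strategy---take $X=B_{P_1,\dots,P_g}Q_{3,1}$, realize Dehn twists by twisting maps $\phi_f$, and handle the extra nonorientable generator separately---is exactly the line the paper follows. The paper's sketch likewise constructs a Dehn twist from an algebraic approximation of a step function $g\colon[-1,1]\to\sS^1$, lifts the resulting $\psi_f$ through the blow-up once one has arranged $\psi_f(P_i)=P_i$, and names the crosscap slide (Lickorish's $Y$-homeomorphism) as the remaining generator, referring to \cite[Section~24]{km1} for it. Your use of the infinite transitivity of $\Aut(\sS^2)$ to realize crosscap permutations is a pleasant addition that the paper does not make explicit.

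There is, however, a genuine gap in your Dehn-twist step. You assert that ``any two-sided simple closed curve in $\xr$ is, up to a crosscap permutation, ambient isotopic to such a parallel avoiding the $P_i$.'' This is false. Any simple closed curve on $\sS^2$ missing the $P_i$ is separating on $\sS^2$, hence its preimage in $\xr$ is a \emph{separating} two-sided curve. But the Lickorish--Chillingworth generating curves for $\Mod(N_g)$ include \emph{non-separating} two-sided curves (already for $g=2$: the essential two-sided curve on the Klein bottle is non-separating, while the Dehn twist along the unique separating curve is isotopically trivial). Crosscap permutations are diffeomorphisms and therefore cannot turn a non-separating curve into a separating one. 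So your construction, as stated, produces only Dehn twists along separating curves, and these together with crosscap transpositions do not generate $\Mod(N_g)$.

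The paper is careful on exactly this point: it presents only ``the easiest case'', where $C^*\subset\sS^2$ passes through none of the $P_j$, and does not claim this covers all needed twists. In \cite{km1} the remaining Dehn twists and the crosscap slide are handled by allowing the curve $C^*$ to pass through some of the $P_j$ and analyzing the lift more carefully; you would need to do the same, rather than reduce everything to parallels disjoint from the blow-up centers.
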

\begin{proof}
To give an idea of the proof, here is the construction of a non-trivial generator of the modular group realized by a birational automorphism.
By a famous theorem of Dehn \cite{De38}, when $S$ orientable, $\Mod(S)$ is generated by Dehn twists, see below. When $S$ nonorientable,  Dehn twists generate an index 2 subgroup of $\Mod(S)$, and we need another kind of generator called a \emph{cross-cap slide}, see \cite{Li65} or \cite[Section~24]{km1}.

Let $S$ be any surface and
$C\subset S$ a simple closed smooth curve such that
$S$ is orientable along $C$. Cut $S$ along $C$, rotate one side
around once completely and glue the pieces back together.
This defines a  diffeomorphism $t_C$ of $S$, see
Figure~\ref{fig.dehn}.

\begin{figure}[ht]
\centering
\includegraphics[scale=.4]{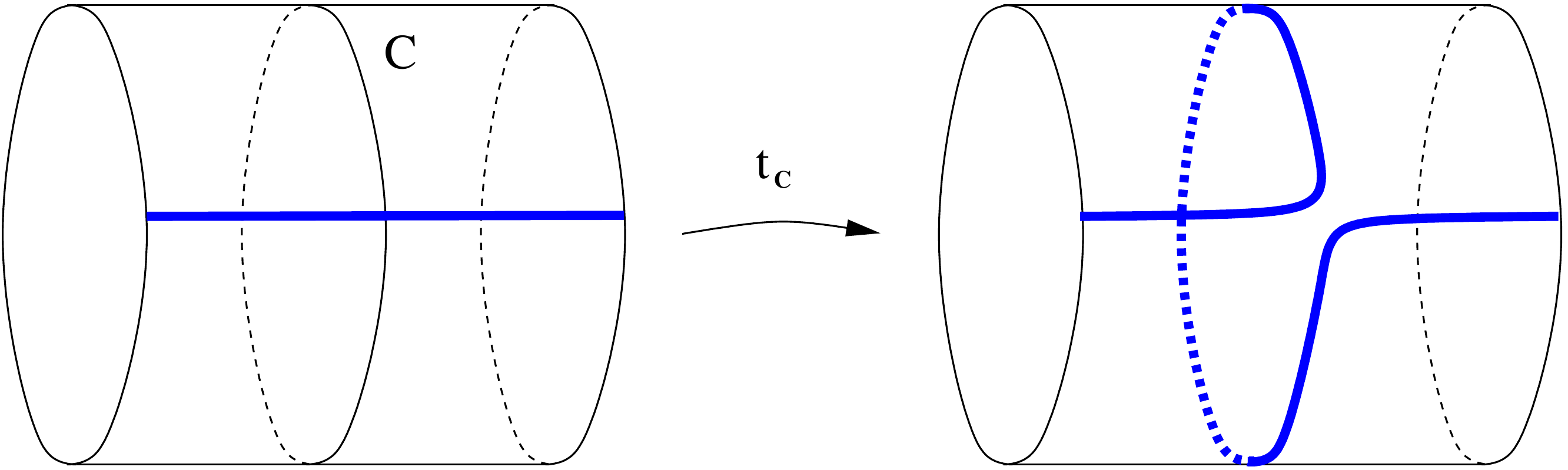}
\caption{The effect of the Dehn twist around $C$ on a curve.}
        \label{fig.dehn}
\end{figure}

The inverse
$t^{-1}_C$ corresponds to rotating one side the other way.
Up to isotopy, the pair $\{t_C, t^{-1}_C\}$ does not depend on the 
choice of $C$ or the rotation.
 Either of $t_C$ and $ t^{-1}_C$ is called a {\it Dehn twist} using $C$.
On an oriented surface, with $C$ oriented, one can
make a sensible distinction
between $t_C$ and $ t^{-1}_C$. This is less useful in the
non-orientable case.

Let $S$ be a nonorientable surface of genus $g$ and $X$ be the blow-up of $\sS^2$ at $g$ points $P_1,\dots,P_g$ a real rational model of $S$. Here is the construction of a Dehn twist in the easiest case. Let  $C^*\subset \sS^2$ be a smooth curve passing through none of the points $P_j$. After applying a  suitable automorphism of $\sS^2$, we may assume that $C^*$ is the big circle $(z=0)$.
Consider the map $g \colon [-1,1] \to \sS^1$ where $g(t)$ is the identity for
$t \in [-1,-\varepsilon]\cup[\varepsilon,1]$ and multiplication by $g(t)$ is the rotation by 
angle $\pi(1 + t/\varepsilon)$ for $t \in [-\varepsilon,\varepsilon]$. 
Let  $f \colon [-1,1] \to \sS^1$ be an algebraic approximation of $g$ such that the corresponding twisting map (cf. p.~\pageref{p.twist}) $\psi_f\colon \sS^2 \to \sS^2$ is the identity at the points $P_i$.
Then the lift of $\psi_f$ to $\xr=B_{P_1,\dots,P_g}\sS^2$ is a birational diffeomorphism of $\xr$; it's an algebraic realization of the Dehn twist using the lift $C$ of the curve $C^*$.
\end{proof}

It is straightforward to see that any element of the modular group is realized by a birational automorphism also in the case    $S\approx\sS^2$, whose modular group is isomorphic to $\ZZ_2$, 
and $S\approx\sS^1\times \sS^1$, whose modular group is isomorphic to $\GL(2,\ZZ)$\footnote{See e.g. \cite[Theorem~2.5]{FM12} for a computation of the modular group of \emph{orientation-preserving} diffeomorphisms.} and is realized by the group of monomial transformations. Thus any surface $S$ admitting a real rational model satisfies the statement of Theorem~\ref{thm.mod}. 

A byproduct of the proof of Theorem~\ref{thm.km1} is that $\Aut\bigl(\xr\bigr)$ is dense in $\Diff\bigl(\xr\bigr)$
when $X$ is a geometrically rational surface with $\#\pi_0(\xr)=1$ (or equivalently when $X$ is rational, see \cite[Corollary~VI.6.5]{Si89}). 
In  \cite{km1}, it is said that $\#\pi_0(\xr)=2$ is probably the only other case where the density holds, but his case remains open nowadays.
Summing up the known results in this direction, see~\cite{km1,blm1}, we get for a smooth real projective~$X$:

\begin{itemize}
\item
 If $X$ is not a geometrically rational surface, then 
$$
\overline{\Aut\bigl(\xr\bigr)}\ne\Diff\bigl(\xr\bigr)\;;
$$
\item If $X$ is a geometrically rational surface, then
\begin{itemize}
\item If $\#\pi_0(\xr)\geq 5$, then $\overline{\Aut\bigl(\xr\bigr)}\ne\Diff\bigl(\xr\bigr)$;
\item If $i=3,4$, there exists smooth real projective surfaces $X$ with $\#\pi_0(\xr)=i$ such that $\overline{\Aut\bigl(\xr\bigr)}\ne\Diff\bigl(\xr\bigr)$;
\item 
if  $\#\pi_0(\xr) =1$,
then $\overline{\Aut\bigl(\xr\bigr)}=\Diff\bigl(\xr\bigr)$. 
\end{itemize}
\end{itemize}

Note that the study of automorphism groups of other real algebraic surfaces than the rational ones has been developed from the point of view of topological entropy of automorphisms by several authors. In particular, 
Moncet \cite{Mo12} defines the \emph{concordance} $\alpha(X)$ for a real algebraic surface $X$ which is a number between $0$ and $1$ with the property that $
\overline{\Aut_\RR(X)}\ne\Diff\bigl(\xr\bigr)
$ as soon as $\alpha(X)>0$. (Notice that $\Aut_\RR(X)$ is the subgroup of $\Aut\bigl(\xr\bigr)$ of real automorphisms of the real algebraic surface $X$.) More precisely, when $X$ is a K3 surface, the groups $\Aut_\RR(X)$ and $\Bir_\RR(X)$ coincide and the non-density result when $\alpha(X)>0$ is stronger than the one of Remark~\ref{rem.nondensity}: actually, the group $\Aut_\RR(X)$ is discrete in the group of diffeomorphisms preserving the volume form given by the triviality of $K_X$. Note also that there exists K3 surfaces with infinite groups of automorphisms, cf. e.g. \cite{Mo12}.

We conclude this section by an important application of the Density Theorem~\ref{thm.km1}.
\begin{dfn}
For a differentiable manifold $M$, let $C^{\infty}(\sS^1,M)$ denote the space 
of all $C^{\infty}$ maps
 from $\sS^1$ to $M$, endowed with the $C^{\infty}$-topology.

Let $X$ be a smooth real algebraic variety and
$C\subset X$ a  rational curve.
By choosing any isomorphism of its normalization $\bar C$ with the
plane conic  $(x^2+y^2=z^2)\subset \PP^2$, we get a
$C^{\infty}$ map  $\sS^1\to \xr$ whose image coincides with
$C(\RR)$, aside from its isolated real singular points.

Let $f\colon L\hookrightarrow \xr$ be an embedded circle. 
We say that $L$ \emph{admits a $\cC^\infty$-approximation by smooth 
 rational   curves} if every neighborhood of $f$ in $C^{\infty}\bigl(\sS^1,\xr\bigr)$ 
contains a map derived as above from a  rational curve with no isolated real singular points.
\end{dfn}
\begin{thm}\cite{km2}
An embedded circle in a nonsingular real rational variety admits a $\cC^\infty$-approximation by smooth rational curves if and only if is is not diffeomorphic to a null-homotopic circle on a $2$-dimensional torus.
 \end{thm}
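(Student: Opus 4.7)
The plan is to prove the two directions separately. For the necessary direction, I extract a homological obstruction showing that null-homotopic circles on a torus cannot be approximated. For the sufficient direction, I transport a fixed, explicit smooth rational curve onto the given circle using Theorem~\ref{thm.km1}.

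Assume first that $X$ is a nonsingular real rational surface with $X(\RR) \approx \sS^1 \times \sS^1$ and that $L \subset X(\RR)$ is null-homotopic. By Theorems~\ref{thm.classif} and \ref{thm.bh} I may replace $X$ by the real rational model $\PP^1 \times \PP^1$. Any smooth rational curve in $\PP^1 \times \PP^1$ has bidegree $(a,b)$ with $a=1$ or $b=1$, so its real locus, when nonempty and smooth, has odd intersection with one of the two rulings and therefore represents a nonzero class in $H_1(\sS^1 \times \sS^1; \ZZ/2)$; in particular it is not null-homotopic. Since a sufficiently fine $\cC^\infty$-approximation of an embedded circle preserves its isotopy class, no such approximation of $L$ exists.

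For the sufficient direction, for every admissible isotopy class of embedded circle in $X(\RR)$ I would first exhibit a smooth rational curve $C^* \subset X$ whose real locus realizes that class. When $X(\RR) \approx \sS^2$, take $C^*$ to be a plane section of $Q_{3,1}$, a smooth conic realizing the unique isotopy class. When $X(\RR)$ is nonorientable, realized as a blow-up of $Q_{3,1}$ at real points (Theorem~\ref{thm.classif}), exceptional divisors realize one-sided classes and strict transforms of plane sections in various positions realize the two-sided classes, together exhausting the isotopy types. When $X(\RR) \approx \sS^1 \times \sS^1$ and $L$ has primitive class $(p,q)$, take $C^*$ to be a fiber of $\PP^1 \times \PP^1$ and invoke the transitivity of $\GL(2,\ZZ)$ on primitive classes. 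Given $C^*$, I choose $\phi \in \Diff(X(\RR))$ with $\phi(C^*(\RR)) = L$ and approximate $\phi$ by birational self-diffeomorphisms $\phi_n \in \Aut(X(\RR))$ via Theorem~\ref{thm.km1}. Each $\phi_n$ is regular on a Zariski-open neighborhood of $X(\RR)$, so $\phi_n(C^*) \subset X$ is a rational curve whose real locus is a smooth embedded circle converging to $L$ in the $\cC^\infty$-topology.

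The hardest step will be to produce the model rational curve $C^*$ in every isotopy class. In the nonorientable case this requires either an explicit construction in each mapping-class-group orbit of embedded circles, or an inductive propagation from a few basic curves using the realization of the mapping class group by birational self-diffeomorphisms (Theorem~\ref{thm.mod}), namely Dehn twists supported on real loci of fibers and cross-cap slides across exceptional divisors. A secondary subtlety is that $\phi_n(C^*)$ is only the scheme-theoretic birational image of $C^*$ and may acquire complex singularities off $X(\RR)$; since $\phi_n$ is a diffeomorphism on $X(\RR)$, however, $\phi_n(C^*(\RR))$ remains smoothly embedded and has no isolated real singular points, as the definition requires. For higher-dimensional rational varieties (also covered by the theorem), the argument would reduce to the surface case by choosing a rational surface in $X$ meeting a small $\cC^\infty$-perturbation of the circle, the $2$-torus obstruction remaining the only essential one.
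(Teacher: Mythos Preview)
The paper does not prove this theorem; it merely cites \cite{km2} and advertises the result as ``an important application of the Density Theorem~\ref{thm.km1}''. So there is no proof here to compare against. Your overall architecture for the sufficient direction---exhibit one model rational curve $C^*$ in each $\Diff(X(\RR))$-orbit of embedded circles, then push it onto $L$ by approximating a suitable diffeomorphism via Theorem~\ref{thm.km1}---is precisely the mechanism the paper is pointing to, and is the approach of \cite{km2}. As you correctly flag, the substantive work lies in manufacturing $C^*$ for every orbit; your invocation of Theorem~\ref{thm.mod} (Dehn twists and cross-cap slides realized birationally) is the right lever.

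Your necessary direction has a real gap. The definition immediately preceding the theorem allows approximation by any rational curve \emph{with no isolated real singular points}; it does not require the complex curve to be smooth. Hence your restriction to bidegrees $(1,b)$ or $(a,1)$ on $\PP^1\times\PP^1$ is unjustified---there are real rational curves of every bidegree. The argument should instead run homologically: the real locus of a real curve of bidegree $(a,b)$ has class $(a\bmod 2,\,b\bmod 2)$ in $H_1(\sS^1\times\sS^1;\ZZ/2)$, so a null-homologous approximation forces $a,b$ both even. But then the arithmetic genus $(a-1)(b-1)$ is odd, and since non-real singularities of a real curve occur in conjugate pairs, any real rational curve of such bidegree must carry a real singular point. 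If that point is isolated the curve is excluded by definition; if not, the normalization map $\sS^1\to X(\RR)$ fails to be an injective immersion, and since embeddings form a $\cC^1$-open subset of $C^\infty(\sS^1,X(\RR))$, no sufficiently fine approximation of the embedded $L$ can arise this way.

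Finally, your last paragraph is too thin to count as a reduction in the higher-dimensional case: you would need to actually produce a real rational \emph{surface} in $X$ containing (a small perturbation of) $L$, and explain why the torus obstruction does not appear unless $X(\RR)$ itself is a torus near $L$.
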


\section{Regulous maps}

In full generality the problem of approximation of differentiable maps by algebraic maps is still open. For instance, the existence of algebraic representatives of homotopy classes of continuous maps between spheres of different dimension does not have a complete solution nowadays, see \cite[Chapter~3]{BCR}. Here is an example of a result in this direction: if $n$ is a power of $2$, and if $p<n$, then any polynomial map from $\sS^n$ to $\sS^p$ is constant, cf. \cite[Thm.~13.1.9]{BCR}.

In \cite{Ku09}, Kucharz introduces the notion of \emph{continuous rational maps}  generalizing algebraic maps between real algebraic varieties. The particular case of continuous rational functions has also been studied by Koll\'ar very recently, see Koll\'ar-Nowak 
\cite{KN-regul}.
Continuous rational maps between nonsingular\footnote{In the singular case, the two notions may differ, see \cite{KN-regul}.} real algebraic varieties are now often called \emph{regulous maps} following \cite{regul}. 

Let $X$ and $Y$ be irreducible nonsingular real algebraic varieties whose sets of real points are Zariski dense. A \emph{regulous map} from $\xr$ to $\yr$ is a rational map $f\colon X \dasharrow Y$ with the following property. Let $U\subset X$ be the domain of the rational map $f$. The restriction of $f$ to $U(\RR)$ extends to a continuous map from $\xr$ to $\yr$ for the euclidean topology.
Kucharz shows that all homotopy classes can be represented by regulous maps \cite[Thm. 1.1]{Ku09}.

\begin{thm}
Let $n$ and $p$ be nonzero natural integers. Any continuous map from $\sS^n$ to $\sS^p$ is homotopic to a regulous map.
\end{thm}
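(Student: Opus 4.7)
The plan is a three-step strategy: reduce to smooth representatives, dispose of the trivial range $n<p$, then approximate in the nontrivial range $n\geq p$ by exploiting the rational parametrization of $\sS^p$ via stereographic projection.

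By standard smoothing (convolution with a mollifier followed by radial retraction onto $\sS^p$ from a tubular neighborhood), any continuous $f\colon\sS^n\to\sS^p$ is homotopic to a $\cC^\infty$-map; we may therefore assume $f$ is smooth. If $n<p$, then $\pi_n(\sS^p)=0$, so $f$ is null-homotopic, and the constant map $\sS^n\to\{N\}$ is regular, hence regulous. We are thus reduced to $n\geq p$.

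For $n\geq p$, the key tool is the inverse stereographic projection
$$
\sigma^{-1}\colon \RR^p \longrightarrow \sS^p \subset \RR^{p+1},\qquad y\longmapsto \Bigl(\frac{2y_1}{1+\|y\|^2},\ldots,\frac{2y_p}{1+\|y\|^2},\frac{\|y\|^2-1}{1+\|y\|^2}\Bigr),
$$
a regular map that sends $\infty$ to the north pole $N\in\sS^p$. Consequently, if $P\colon \sS^n\dashrightarrow\RR^p$ is a rational map whose real poles are genuine blow-ups (so that $\|P(x)\|\to\infty$ at each real pole), then $\sigma^{-1}\circ P$ extends continuously across the real pole set of $P$ by sending each pole to $N$, producing a regulous map $\sS^n\to\sS^p$. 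This \emph{absorption of poles into} $N$ is precisely the feature that makes regulous maps strictly more flexible than regular ones.

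It remains to approximate the given smooth $f$ by such a composition. Perturbing $f$ within its homotopy class, we may assume $V:=f^{-1}(N)$ is a $\cC^\infty$-submanifold of $\sS^n$ of codimension $p$, and we set $g:=\sigma\circ f\colon \sS^n\setminus V\to\RR^p$. The crucial step, which is the main obstacle, is a regulous approximation theorem: approximate $g$ uniformly on compact subsets of $\sS^n\setminus V$ by a rational map $P\colon \sS^n\dashrightarrow\RR^p$ whose real pole set is an algebraic thickening of $V$ and along which $\|P\|$ blows up. This relies on Stone--Weierstrass in the rational function field of $\sS^n$ combined with the resolution-type results underlying regulous analysis, see \cite{regul,KN-regul}. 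Granted such a $P$, the regulous map $\sigma^{-1}\circ P$ is uniformly $C^0$-close to $f$: close to $f$ away from a neighborhood of $V$ by construction, and close to $N$ on a neighborhood of $V$ by the pole behavior of $P$. Hence $\sigma^{-1}\circ P$ is homotopic to $f$ via the straight-line homotopy in $\RR^{p+1}$ followed by radial retraction onto $\sS^p$.
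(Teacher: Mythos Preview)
The paper does not supply its own proof of this theorem: it is quoted from Kucharz \cite[Thm.~1.1]{Ku09} without argument, so there is no in-paper proof to compare your proposal against. What I can do is assess whether your argument stands on its own.

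Your outline has the right overall shape --- smoothing, trivial range $n<p$, then stereographic absorption of poles --- and the idea that $\sigma^{-1}$ turns a rational map to $\RR^p$ with real poles into a regulous map to $\sS^p$ is correct and is indeed one of the mechanisms behind results of this kind. However, the step you flag as ``the main obstacle'' is not actually carried out, and the justification you offer does not suffice.

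Concretely, you assert the existence of a rational $P\colon \sS^n\dashrightarrow\RR^p$ that (i) approximates $g=\sigma\circ f$ on compacta of $\sS^n\setminus V$, (ii) has real pole locus an ``algebraic thickening'' of the smooth submanifold $V=f^{-1}(N)$, and (iii) satisfies $\|P\|\to\infty$ there. Stone--Weierstrass gives polynomial approximation on a fixed compact set, but it gives no control whatsoever on the location or nature of poles, nor on the growth of $P$ near $V$; invoking ``resolution-type results'' from \cite{regul,KN-regul} does not supply this either, since those papers concern the structure of regulous functions, not the construction of rational maps with prescribed pole divisors approximating a smooth submanifold. Two genuine difficulties are hidden here: first, replacing the $\cC^\infty$ codimension-$p$ submanifold $V$ by a nearby nonsingular real algebraic set (a Nash--Tognoli type input that must also control the normal bundle, since the \emph{direction} in which $g$ goes to infinity along $V$ encodes the framing and hence the homotopy class via Pontryagin--Thom); second, producing a rational map whose real indeterminacy is exactly that algebraic set and whose asymptotic direction matches $g$. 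Without these, the claimed $C^0$-closeness of $\sigma^{-1}\circ P$ to $f$ near $V$ fails: two maps can both send a neighborhood of $V$ close to $N$ yet be far apart on $\sS^p$, and a mismatch of normal data changes the homotopy class. Kucharz's actual proof addresses precisely this framed-cobordism content; your sketch names the obstacle but does not overcome it.
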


In fact the statement is more precise: Let $n,p$ and $k$ be natural integers, $n$ and $p$ being nonzero. Any continuous map from $\sS^n$ to $\sS^p$ is homotopic to a $k$-regulous map. see below.

The paper \cite{regul} sets up foundations of a regulous geometry: algebra of regulous functions and regulous topologies. Here is a short account. Recall that a rational function~$f$ on~$\RR^n$ is called a \emph{regular function} on~$\RR^n$ if $f$ has no pole on~$\RR^n$. For instance, the rational function~$f(x)=1/(x^2+1)$ is regular on~$\RR$.  The set of regular functions on~$\RR^n$ is a subring of the field~$\RR(x_1,\dots,x_n)$ of rational function on~$\RR^n$.
A \emph{regulous function} on~$\RR^n$ is a real valued function defined at any point of~$\RR^n$, which is continuous for the euclidean topology and whose restriction to a nonempty Zariski open set is regular. A typical example is the function
$$
f(x,y)=\frac{x^3}{x^2+y^2}
$$ which is regular on~$\RR^2\setminus\{0\}$ and regulous on the whole~$\RR^2$. Its graph is the canopy of the famous Cartan umbrella, see Figure~\ref{fig.cartan}. 
\begin{figure}[ht]
\centering
\includegraphics[height =6cm]{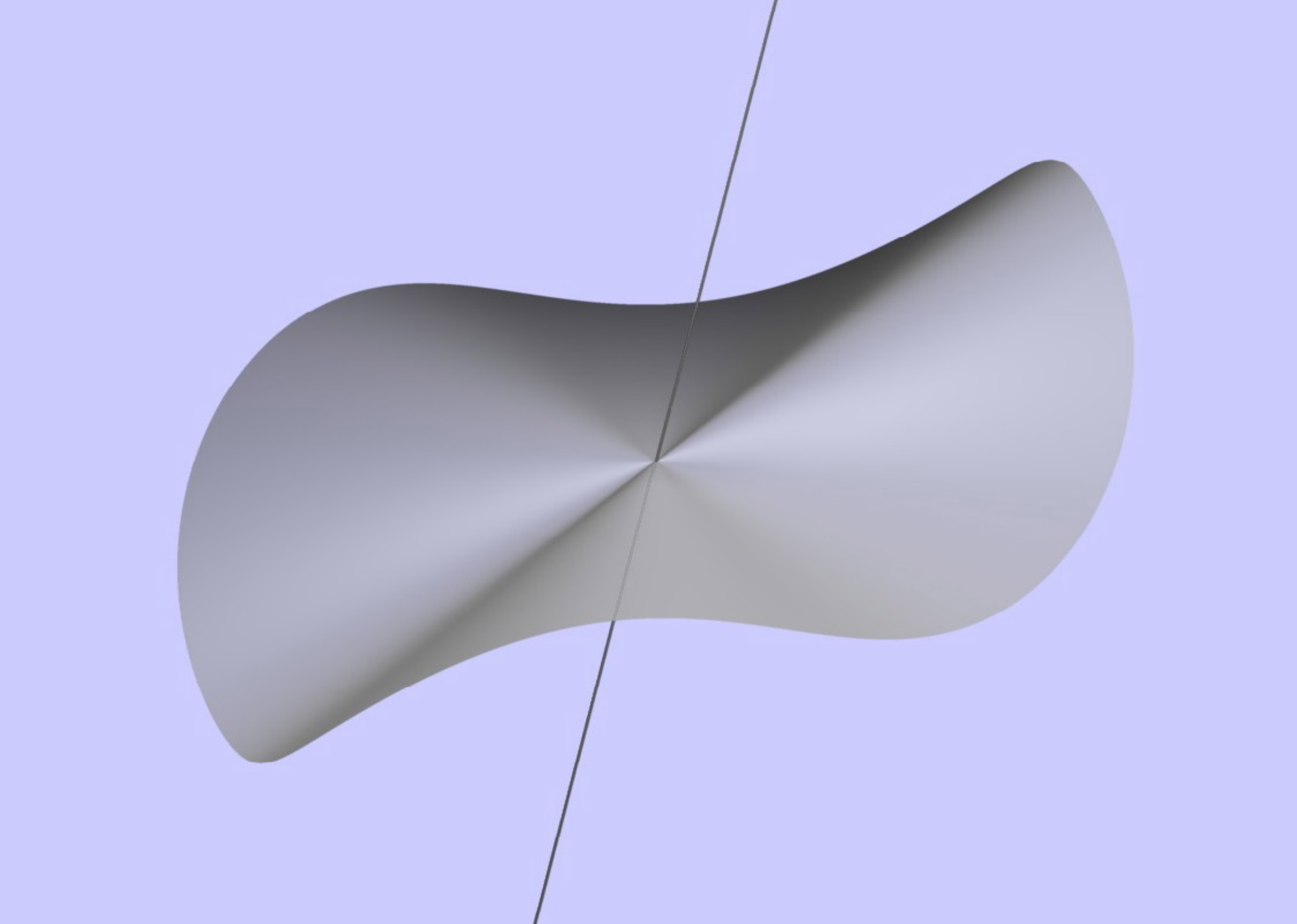}
\caption{The Cartan umbrella: $z(x^2+y^2)=x^3$.}
        \label{fig.cartan}
\end{figure}
The set of regulous functions on~$\RR^n$ is a subring $\cR^0(\RR^n)$ of the field~$\RR(x_1,\dots,x_n)$. More generally, a function defined on $\RR^n$ is \emph{$k$-regulous}, if it is at the same time, regular on a nonempty Zariski open set, and of class~$\cC^k$ on~$\RR^n$. Here, $k\in\NN\cup \{\infty\}$.  For instance, the function
$$
f(x,y)=\frac{x^{3+k}}{x^2+y^2}
$$ is $k$-regulous on $\RR^2$ for any natural integer $k$. We can prove that an $\infty$-regulous function on~$\RR^n$ is in fact regular (the converse statement is straightforward) and we get an infinite chain of subrings:
$$
\cR^\infty(\RR^n)\subseteq\cdots\subseteq\cR^2(\RR^n)\subseteq\cR^1(\RR^n)
\subseteq\cR^0(\RR^n)\subseteq\RR(x_1,\dots,x_n).
$$
where $\cR^k(\RR^n)$ denotes the subring of~$\RR(x_1,\dots,x_n)$ consisting of $k$-regulous functions.

The $k$-regulous topology is the topology whose closed sets are zero sets of $k$-regulous functions.
Figure~\ref{fig.horned} represents a "horned umbrella" which is the algebraic subset of $\RR^3$ defined by the equation $x^2+y^2\bigl((y-z^2)^2+yz^3\bigr)=0$.

It is irreducible for the $\infty$-regulous topology, but reducible for the $k$-regulous topology for any natural integer $k$.
\begin{figure}[ht]
\centering
\includegraphics[height =6cm]{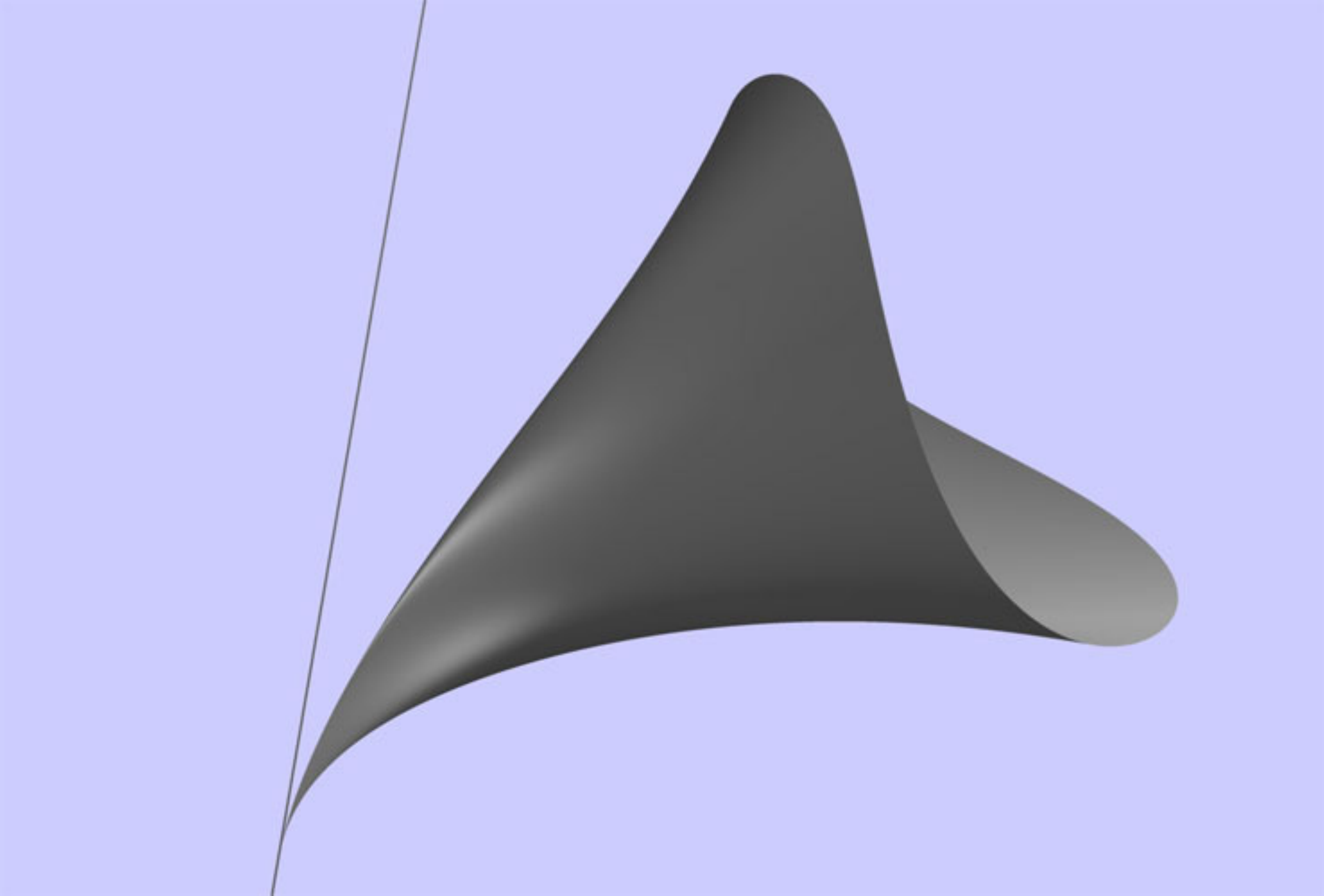}
\caption{A horned umbrella: $x^2+y^2\bigl((y-z^2)^2+yz^3\bigr)=0$.}
        \label{fig.horned}
\end{figure}

In fact, the "horn" of the umbrella is closed for the $0$-regulous topology as it is the zero set of the regulous function 
$$
(x,y,z) \mapsto z^2\frac{x^2+y^2\bigl((y-z^2)^2+yz^3\bigr)}{x^2+y^4+y^2z^4}\;.
$$ 

The "stick" of the umbrella is also closed: it is the zero set of $(x,y,z) \mapsto x^2+y^2$, thus the umbrella is reducible for the regulous topology, see \cite[Exemple~6.12]{regul} for details.

In the paper \cite{regul}, several properties of the rings $\cR^k(\RR^n)$ are established. In particular, a strong Nullstellensatz is proved. The scheme theoretic properties are studied and regulous versions of Theorems A and B of Cartan are proved. There is also a geometrical characterization of prime ideals of $\cR^k(\RR^n)$ in terms of the zero-locus of regulous functions and a relation between $k$-regulous topology and the topology generated by euclidean closed Zariski-constructible sets. Many papers are related to this new line of research and among them we recommend:
\cite{Ku13,BKVV13,KK13,Ku14-JEMS,Ku14-Arch,No14} \cite{FMQ14}.

\backmatter

\bibliographystyle{smfalpha}
\bibliography{biblio-perso,biblio-tvar,biblio-preprints}

\end{document}